\newcommand{\dist}{\text{dist}}
\newcommand{\trace}[1]{\text{tr}(#1)}
\newcommand{\Sn}{\mathbb{S}^n}
\newcommand{\Snm}{\mathcal{S}^n_{+}}
\newcommand{\Rn}{\mathbb{R}^n}
\newcommand{\Rm}{\mathbb{R}^m}
\newtheorem{theorem}{Theorem}
\newtheorem{lemma}[theorem]{Lemma}
\newtheorem{definition}{Definition}
\newtheorem{corollary}[theorem]{Corollary}
\newtheorem{proposition}[theorem]{Proposition}
\newtheorem{remark}{Remark}
\begin{document}

\title{Levenberg-Marquardt methods with inexact projections for constrained nonlinear systems}
\author{
    Douglas S. Gon\c calves
    \thanks{Departamento de Matem\'atica, Universidade Federal de Santa Catarina, Florian\'opolis, SC 88040-900, Brazil. (E-mail: {\tt
       douglas@mtm.ufsc.br}. The work of this author was supported in part by CNPq Grant 421386/2016-9.}
  \and  Max L. N. Gon\c calves
  \thanks{IME, Universidade Federal de Goi\'as, Goi\^ania, GO 74001-970, Brazil. (E-mails: {\tt
       maxlng@ufg.br} and {\tt fabriciaro@gmail.com}). The work of these authors was
    supported in part by CAPES, FAPEG/CNPq/PRONEM-201710267000532, and CNPq Grants   302666/2017-6 and 408123/2018-4.}
   \and Fabr\'icia R. Oliveira\footnotemark[2]
}
 \maketitle




\begin{abstract}
In this paper, we first propose a new 
Levenberg-Marquardt method for solving constrained (and not necessarily square) nonlinear systems.
Basically, the method combines the unconstrained Levenberg-Marquardt method with a type of feasible  inexact projection.
The local convergence of the new method as well as results on its rate are established by using an error bound condition, 
which  is weaker than the standard full-rank assumption. 
We further present and analyze a global version of the first method by means of a nonmonotone line search technique.
Finally, numerical experiments illustrating the practical advantages  of the proposed schemes are reported.
\end{abstract}

\textbf{Keywords.} constrained nonlinear systems; local convergence; global convergence; Levenberg-Marquardt method; inexact projections; error bound.


\section{Introduction}\label{sec:int3}
In this paper, we consider the following problem
\begin{equation}\label{eq:prob}
\begin{aligned}
\text{Find } x \in \mathbb{R}^n \; \text{:} & \; F(x)=0, \quad x\in C,
\end{aligned}
\end{equation}
where  $C$ is a nonempty closed convex set contained in an open set $\Omega \subset \mathbb{R}^n$ and $F: \Omega \rightarrow \mathbb{R}^m$ is a continuously differentiable function.  Throughout this paper, we will assume that
 the solution set of \eqref{eq:prob}, denoted by $C^*$, is nonempty. 
 
Problem~\eqref{eq:prob} has been the object of intense research in the last decades since many applications that arise in different areas such as engineering, chemistry, economy among others can be modeled by a constrained system of nonlinear equations. Consequently, many efficient algorithms such as trust region, interior point, active-set, Newton-type, Gauss-Newton, Levenberg-Marquardt methods have been used to solve \eqref{eq:prob}; see, for instance \cite{Behling2017,Behling2014,morini1,bellavia2006,echebest2012,Fan2013,CondG,Oliveira2017,Kanzow2004,sandra,cruz2014,MACCONI2009859,mariniquasi,martinez1997,morini2016,Porcelli,wang2016,Zhang1,Zhu2005343}.

The unconstrained pure/local Levenberg-Marquardt \cite{Levenberg1944,Marquardt1963} method recursively computes  a sequence $\{x_k\}$ as follows. 
Given $x_{k} \in \mathbb{R}^n$ and $\mu_{k}>0$,
\begin{equation}\label{eq:lmi}
x_{k+1} = x_{k} + d_{k}^U,
\end{equation}
where the step $d_{k}^U$ is the solution of the problem
\begin{equation}\label{unconstrained}
\min_{d \in \mathbb{R}^n}  \; \| F(x_{k}) + F'(x_{k}) d \|^2 + \mu_{k} \|d\|^2,
\end{equation}
or, equivalently, $d_{k}^U = - ( F'(x_{k})^T F'(x_{k}) + \mu_{k} I )^{-1}F'(x_{k})^T F(x_{k})$. 
In this case, $F'(x)$ denotes the Jacobian matrix of $F$ at $x$. 
The regularization parameter $\mu_{k} > 0$, which is not present in the Gauss-Newton method, turns the problem \eqref{unconstrained} into a strongly convex one and hence it possesses a unique solution. A classical choice of the regularization parameter is  $\mu_k = \| F(x_k) \|^2,$ for every $k\geq 0$; however,  suggestions of different regularization parameters  have been discussed, for example, in \cite{Fan2013,Zhang2003}.  We  also refer the reader to \cite{Fan2005,Yamashita2000} where convergence results of the unconstrained Levenberg-Marquardt method  and its variants have been studied.

In order to solve  constrained problems (see, e.g., \cite{Behling2017,Behling2014,Kanzow2004}), the Levenberg-Marquardt method has been adapted in two different  ways:
(i)  the constraint  $x_{k}+d\in C$ is added to the subproblem \eqref{unconstrained} (resulting in the so-called constrained Levenberg-Marquardt method);
(ii) the update  \eqref{eq:lmi} is replaced by $x_{k+1} =P_{C}( x_{k} + d_{k}^U)$, where $P_{C}$  is the  orthogonal projector  onto $C$
(arriving at the projected Levenberg-Marquadt method).
Since the subproblem in the former strategy can be relatively complicated,  depending on the feasible set $C$, 
the projected Levenberg-Marquardt method is much  more interesting mainly   when the projection steps are not  expensive.

Therefore, the  goal of this article is to present some improvements in the projected Levenberg-Marquardt method.  Since  depending on the geometry of $C$, 
the  orthogonal projection onto it neither has a closed-form nor can be easily computed,  
we first propose  a  local Levenberg-Marquardt method  in  which   inexact projections  are allowed. 
The \textit{feasible} inexact projections used in our algorithm can be easily obtained by means of an iterative method 
(e.g., the conditional gradient method \cite{jaggi13}) in the cases where computing  the exact  projections  are difficult and  expensive  (see Definition~\ref{def:IP} and Remark~\ref{remark1d} below). The local convergence of the proposed method as well as results on its rate are established by using an error bound condition, which  is weaker than the standard full-rank condition of the $F'$.  Specifically,  let $\{x_k\}$ be the sequence generated by the method and $\dist(x, C^*)$ the distance from $x$ to the solution set $C^*$.  
We show that the sequence $\{\dist(x_k, C^*)\}$ converges to zero linearly and if, additionally,  the inaccuracies of the projections tend to zero sufficiently fast, 
then the convergence is superlinear. 
Moreover, we also deduce the convergence rate for the sequence $\{ x_k \}$. 

Then, we present and analyze a globalized version of the local method. 
Basically, it  
consists of combining  our first algorithm, safeguarded by inexact projected gradient  steps, with the nonmonotone line search technique in \cite{grippo1986}.  
It is worth pointing  out that  the nonmonotone strategies have been shown more efficient than monotone ones due to the fact that enforcing the monotonicity of the function values  may make the method to converge slower. For the global method, we prove that  any accumulation point of the iterative sequence 
is a stationary point of $\min_{x\in C}   \|F(x)\|^2/2.$

It should also be pointed out that, due to the inexactnesses of the projections, 
the convergence analyses  of the proposed projected Levenberg-Marquardt schemes are, in some sense,  
more challenging.

Finally, in order to assess  the practical behavior of the new methods, some numerical experiments are reported.
In particular, we present a  scenario in which our concept of inexact projection becomes  interesting in practice.


\ \\
{\bf Outline of the paper:} Section~\ref{sec:condGmet3} introduces the concept of feasible inexact projections and  describes the local Levenberg-Marquardt method with inexact projections (LMM-IP).  The local convergence analysis of the LMM-IP is presented in Subsection~\ref{localcon}.  Section~\ref{global}  proposes and analyzes  a global version  of the LMM-IP studied in Section~\ref{sec:condGmet3}. Some preliminary numerical experiments for the proposed schemes are reported in Section~\ref{numerical}.
Finally, some concluding remarks are given in Section~\ref{remarks}.
%
\\[2mm]
{\bf Notation:}  We denote by $F'(x)$ the Jacobian matrix of $F$ at $x \in \Omega$. The inner product and its associated Euclidean norm in $\mathbb{R}^n$ are denoted by $\langle\cdot,\cdot\rangle$ and $\| \cdot \|$, respectively. The closed ball centered at $x$ with radius $r$ is denoted by $B(x,r) := \{ y \in \mathbb{R}^n \ : \ \| y - x\| \leq r \}$. 
We define by
\begin{equation}\label{dist}
\dist(x,C^*):= \inf_{y \in C^*} \| y - x \|,
\end{equation}
the distance from $x$ to the solution set $C^*$. We also represent by $\bar{x}$ a point in $C^*$ which realizes such distance, i.e.
\begin{equation}\label{dist1}
\|x - \bar{x} \| = \dist(x,C^*).
\end{equation}
For a matrix $X\in\mathbb{R}^{n\times n}$, its transpose is denoted by $X^T$, and  $ X \succeq 0$ means that
$X$ is positive semidefinite.

\section{Levenberg-Marquardt method with inexact projections}\label{sec:condGmet3}

In this section, we propose and analyze a local Levenberg-Marquardt method with inexact projections (LMM-IP) to solve \eqref{eq:prob}.  The local convergence of the proposed method as well as results on its rate are established by using an error bound condition, which  is weaker than the standard full-rank assumption. 

In order to present our algorithm, we first need to discuss a concept of approximate projection.
It is worth pointing out that, depending on the definition/geometry of $C$, computing  the orthogonal projection of a point onto $C$ can be difficult and  expensive. In order to overcome this drawback, our algorithm will  admit a certain type of  inexact projection.

\begin{definition}\label{def:IP}
Let $x\in \mathbb{R}^n$ and $\varepsilon \geq 0$ be given. We say that $ P_{C}(x,\varepsilon)$ is an  $\varepsilon$--projection of $x$ onto $C$ when
\begin{equation}\label{eq:iproj}
P_{C}(x,\varepsilon) \in C \quad \text{and} \quad \langle x - P_{C}(x,\varepsilon), y - P_{C}(x,\varepsilon) \rangle \leq \varepsilon, \quad \forall y \in C.
\end{equation}
\end{definition}

\begin{remark}\label{remark1d}
(i) Note that, if  $\varepsilon=0$, then $ P_{C}(x,0)$  corresponds to the  orthogonal projection of $x$ onto $C$, which will be denoted, simply,  by $P_{C}(x)$.  On the other hand, $P_{C}(x)$ is an  $\varepsilon$--projection of $x$ onto $C$ in the sense of Definition~\eqref{def:IP} for any $\varepsilon\geq0$.
(ii) In the case that orthogonal projection onto $C$ neither has a closed-form nor can be easily computed,
an $\varepsilon$--projection of $x$ onto $C$ can be obtained by means of an iterative method applied to solve the projection problem $\min_{y\in C}\|y-x\|^2/2$.
For example, if $C$ is bounded,  one can use
the conditional gradient (CondG) method, a.k.a. Frank-Wolfe method \cite{fw1956,jaggi13},
to obtain an inexact projection in the sense of Definition~\ref{def:IP}.
Given $z_{t}\in C$,  the $t$-th step of the CondG method first finds  $w_t$ as  a minimum of the linear function
$\langle{z_{t} - x},{\cdot - z_{t}} \rangle$ over  $ C$ and then set $z_{t+1} =(1-\alpha_t)z_{t}+\alpha_t w_t $ for some $\alpha_t \in  [0,1]$.
Hence, if $\langle{z_{t} - x}, {w_t - z_{t}} \rangle \geq -\varepsilon$ is used as a  stopping criterion in the CondG method,
we  will  have that  the output $z_{t}$ is an $\varepsilon$--projection of $x$ onto $C$.

\end{remark}

It is well-known that the projection operator $ P_{C}(\cdot)$ is nonexpansive, i.e.,
\begin{equation}\label{Pro:exp}
\| P_{C}(y) - P_{C}(x)\| \le \| x - y \|, \quad \forall x,y \in \mathbb{R}^n.
\end{equation}
In the next proposition, we establish  a similar  property for the operator $P_{C}(\cdot,\cdot)$.


\begin{proposition}\label{prop:errproj}
For any  $x,y \in \mathbb{R}^n$ and $\varepsilon\geq 0$, we have
\[
\|P_C(x,\varepsilon) - P_C(y)\| \leq \|x-y\| + \sqrt{\varepsilon}.
\]
\end{proposition}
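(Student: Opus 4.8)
The plan is to follow the classical proof that the exact projection is nonexpansive (which yields \eqref{Pro:exp}), carrying along the slack $\varepsilon$ coming from the inexactness. Write $p := P_C(x,\varepsilon)$ and $q := P_C(y)$; both belong to $C$. Evaluating the defining inequality \eqref{eq:iproj} of the $\varepsilon$-projection $p$ at the feasible point $y = q$ gives $\langle x - p,\, q - p \rangle \le \varepsilon$. For the exact projection $q = P_C(y)$ I would instead use the sharp variational inequality, i.e.\ the case $\varepsilon = 0$ of \eqref{eq:iproj}, evaluated at the feasible point $p$: $\langle y - q,\, p - q \rangle \le 0$.

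Next I would add these two inequalities. Using $(x - p) - (y - q) = (x - y) - (p - q)$ and $\langle p - q,\, q - p \rangle = -\|p - q\|^2$, the sum rearranges to
\[
\|p - q\|^2 + \langle x - y,\, q - p \rangle \le \varepsilon.
\]
The Cauchy--Schwarz inequality applied to the remaining inner product then yields the scalar bound $\|p - q\|^2 \le \varepsilon + \|x - y\|\,\|p - q\|$.

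Finally I would extract the claimed estimate from this quadratic inequality. Setting $t := \|p - q\|$ and $a := \|x - y\|$, we have $t^2 \le a t + \varepsilon$. A brief case split finishes the argument: if $t \le \sqrt{\varepsilon}$ the bound $t \le a + \sqrt{\varepsilon}$ is trivial, while if $t > \sqrt{\varepsilon}$ one factors $t^2 - \varepsilon = (t - \sqrt{\varepsilon})(t + \sqrt{\varepsilon}) \le a t \le a\,(t + \sqrt{\varepsilon})$ and divides by the positive factor $t + \sqrt{\varepsilon}$ to get $t \le a + \sqrt{\varepsilon}$, which is precisely the assertion.

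The only subtle point is the asymmetric use of the two defining inequalities: the slack $\varepsilon$ must be charged only to $p = P_C(x,\varepsilon)$, whereas for $q = P_C(y)$ the exact inequality with no slack is used, and the feasible test points must be chosen as the other projection in each case. Symmetrizing the roles (e.g.\ replacing $P_C(y)$ by a second $\varepsilon$-projection) would introduce an extra $\varepsilon$ and spoil the constant $\sqrt{\varepsilon}$. Everything else is routine algebra and an elementary scalar estimate.
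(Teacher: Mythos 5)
Your argument is correct, but it follows a genuinely different route from the paper's. The paper first compares the inexact projection with the \emph{exact projection of the same point}: pairing the variational inequality for $P_C(x)$ (tested at $P_C(x,\varepsilon)$) with the $\varepsilon$-inequality for $P_C(x,\varepsilon)$ (tested at $P_C(x)$) gives the clean intermediate estimate $\|P_C(x,\varepsilon)-P_C(x)\|\le\sqrt{\varepsilon}$, after which the claim follows from the triangle inequality and the standard nonexpansiveness \eqref{Pro:exp}. You instead pair the two inequalities for the two \emph{different} base points $x$ and $y$ directly, in the style of the classical monotonicity proof of nonexpansiveness, arriving at the quadratic inequality $\|p-q\|^2\le\varepsilon+\|x-y\|\,\|p-q\|$ and then solving it. Both are sound; the paper's version is more modular (the bound $\|P_C(x,\varepsilon)-P_C(x)\|\le\sqrt{\varepsilon}$ is a reusable fact quantifying how far the inexact projection strays from the true one), while yours avoids introducing the third point $P_C(x)$ and in fact yields the slightly sharper intermediate bound $\|p-q\|\le\bigl(\|x-y\|+\sqrt{\|x-y\|^2+4\varepsilon}\bigr)/2$, which you then relax to the stated additive form. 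Your closing remark about the necessary asymmetry (charging the slack only to the $\varepsilon$-projection) is accurate and applies equally to the paper's pairing.
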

\begin{proof}
It follows from the characterization of the orthogonal projection and Definition~\ref{def:IP}  that
$$ \langle x - P_{C}(x), P_{C}(x,\varepsilon) - P_{C}(x)\rangle \leq 0, \qquad \langle x - P_{C}(x,\varepsilon), P_{C}(x) - P_{C}(x,\varepsilon)\rangle \leq \varepsilon.$$
Combining the last two inequalities, we obtain
$$\langle x - P_{C}(x) - x + P_{C}(x,\varepsilon), P_{C}(x,\varepsilon) - P_{C}(x) \rangle \leq \varepsilon,$$
or, equivalently,
 \[
 \|P_{C}(x) - P_{C}(x,\varepsilon) \| \le \sqrt{\varepsilon}.
\]
Therefore, using the triangle inequality and  \eqref{Pro:exp}, we have
\[
\|P_C(x,\varepsilon) - P_C(y)\|\leq \|P_C(x,\varepsilon) - P_C(x)\|+\|P_C(x) - P_C(y)\|\leq \sqrt{\varepsilon}+\|x-y\| ,
\]
concluding the proof.
\end{proof}

We are now ready to formally described the  Levenberg-Marquardt method with inexact projections.
\\[3mm]
\fbox{
\begin{minipage}[h]{0.96\textwidth}
{\bf  LMM-IP}
\begin{description}
\item[ Step 0.] Let $x_0\in C$ and $\{\theta_j\}\subset[0,\infty)$  be given. Set $k=0$.
\item[ Step 1.] If $F(x_k)=0$, then {\bf stop}; otherwise, set $\mu_k := \|F(x_k)\|^2$ and compute $d_k^U \in \mathbb{R}^n$ such that
\begin{equation}\label{A1:s1}
(F'(x_k)^T F'(x_k) + \mu_k I )d_k^U= -F'(x_k)^T F(x_k).
\end{equation}
\item[ Step 2.] Define $\varepsilon_k := \theta_k^2 \| d_k^U \|^2$. Compute $P_{C}(x_k + d_k^U, \varepsilon_k)$, an $\varepsilon_k$--projection of $x_k + d_k^U$ onto $C$, and set
\begin{equation}\label{eq:lm3}
x_{k+1} := P_{C}(x_k + d_k^U, \varepsilon_k).
\end{equation}
\item[ Step 3.]  Set $k\gets k+1$, and go to Step~1.
\end{description}
\noindent
\end{minipage}
}
\\

\begin{remark} (i) Since $\mu_k>0$, it follows that the matrix of the linear system \eqref{A1:s1} is symmetric  positive definite  and hence Step~1 is  always well-defined. As a consequence, the  LMM-IP is also always well-defined.
(ii) From Step~2 of the LMM-IP and Definition~\ref{def:IP}, we have  $x_{k+1}$ satisfies, for every $k\geq0$,
\begin{equation}\label{eq:condpro}
x_{k+1}\in C, \qquad \langle x_k + d_k^U - x_{k+1}, y - x_{k+1}\rangle \le  \theta_k^2 \| d_k^U \|^2, \quad \forall y \in C.
\end{equation}
See Remark~\ref{remark1d} for some comments about our concept of the  inexact projection and how to compute it.
\end{remark}

\subsection{Local convergence of the LMM-IP}\label{localcon}

In order to analyze the local convergence of the LMM-IP, the following assumptions are made throughout this subsection.

\begin{itemize}
\item[{\bf(A0)}]  Assume $C^* \ne \emptyset$ and let $x_* \in C^*$ be an arbitrary element of the solution set.
\item[{\bf(A1)}]  There exist $L,\delta_1>0$  such that
the $F'$ is $L$-Lipschitz continuous in  $B(x_*, \delta_1)$, i.e.,
$$
\| F'(x) - F'(y) \| \leq L \| x - y \|, \quad \forall x,y \in  B(x_*, \delta_1).
$$
\item[{\bf(A2)}]  There exist $\omega,\delta_2 > 0$ such that
$\| F(x) \|$ provides a local error bound on $B(x_*,\delta_2)$, i.e.,
\begin{equation}\label{cond:errb}
\omega \, \dist(x, C^*) \leq \| F(x) \|, \quad \forall x \in B(x_*,\delta_2).
\end{equation}
\end{itemize}

It is worth mentioning that {\bf (A2)} was used in \cite{Kanzow2004} to analyze the local convergence of constrained and projected Levenberg-Marquardt methods. 
Although {\bf (A2)} may not be satisfied for zeroes of $F$ in the boundary of $C$, this condition is still weaker than the standard
full-rank condition of the Jacobian matrix (see, e.g., \cite{Kanzow2004} for more details).
We also refer the interested reader to \cite{Behling2017} for some discussions on error bound conditions in the context of projected Levenberg-Marquardt methods.

An immediate consequence of {\bf (A1)} and smoothness properties of the mapping $F$, whose proof will be  omitted,  is given  in the following proposition.


\begin{proposition}\label{prop:lip}
If {\bf (A1)} holds, then
\begin{equation}\label{eq:IPLM14}
\| F(y) - F(x) - F'(x)(y - x)\| \leq \dfrac{L}{2} \|x - y \|^2, \quad \forall x,y \in B(x_*, \delta_1).
\end{equation}
Moreover, there exists $L_0>0$ such that
\begin{equation*}\label{eq:IPLM15}
\| F(x) - F(y) \| \leq L_0 \| x - y \|, \quad \forall x,y \in B(x_*, \delta_1).
\end{equation*}
\end{proposition}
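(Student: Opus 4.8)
The plan is to derive both estimates from the integral form of the fundamental theorem of calculus, exploiting that the ball $B(x_*,\delta_1)$ is convex so that the whole segment joining any two of its points remains inside the region where {\bf (A1)} applies. For the first inequality, I would write, for $x,y\in B(x_*,\delta_1)$,
$$
F(y)-F(x)-F'(x)(y-x)=\int_0^1\bigl[F'(x+t(y-x))-F'(x)\bigr](y-x)\,dt,
$$
which is valid because $x+t(y-x)\in B(x_*,\delta_1)$ for all $t\in[0,1]$ by convexity. Taking norms, passing the norm inside the integral, and invoking the Lipschitz bound from {\bf (A1)} gives $\|F'(x+t(y-x))-F'(x)\|\le L\,t\,\|y-x\|$, so the integrand is bounded by $L\,t\,\|y-x\|^2$. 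Integrating $\int_0^1 t\,dt=1/2$ yields exactly the claimed bound $\tfrac{L}{2}\|x-y\|^2$.

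For the second inequality, I would first bound the operator norm of the Jacobian uniformly on the ball. Using the triangle inequality together with {\bf (A1)}, for any $z\in B(x_*,\delta_1)$ we have $\|F'(z)\|\le\|F'(z)-F'(x_*)\|+\|F'(x_*)\|\le L\delta_1+\|F'(x_*)\|$, so it is natural to set $L_0:=L\delta_1+\|F'(x_*)\|$. Then, writing $F(x)-F(y)=\int_0^1 F'(y+t(x-y))(x-y)\,dt$ and again using convexity to keep the integration path inside the ball, taking norms gives $\|F(x)-F(y)\|\le L_0\|x-y\|$, as required. Alternatively one may obtain such a constant by noting that $F'$ is continuous, hence bounded, on the compact ball $B(x_*,\delta_1)$; the explicit triangle-inequality estimate has the advantage of exhibiting $L_0$ in closed form.

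There is no genuine obstacle here: both claims are standard consequences of a Lipschitz Jacobian, and the proof the paper omits is precisely this routine computation. The only point deserving a moment's care is the convexity of $B(x_*,\delta_1)$, which guarantees that each intermediate point $x+t(y-x)$ lies in the domain of the Lipschitz estimate; without it the integral representation could leave the region controlled by {\bf (A1)}. I would state this explicitly and then let the two estimates follow immediately from the norm bounds above.
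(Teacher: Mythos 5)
Your proposal is correct and is precisely the routine argument the paper explicitly omits: the integral (Newton--Leibniz) representation over the segment, combined with the Lipschitz bound from {\bf (A1)} for the quadratic estimate, and the uniform Jacobian bound $L_0=L\delta_1+\|F'(x_*)\|$ on the convex ball for the local Lipschitz estimate. Nothing further is needed.
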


The next result summarizes some well-known properties of the unconstrained Levenberg-Marquardt method which are consequences of assumptions {\bf (A1)} and {\bf (A2)}.
\begin{proposition}\label{prop:boundsU}
Suppose that {\bf (A1)} and {\bf (A2)} hold.
If $x_k \in B(x_*,\delta/2)$, where $\delta = \min\{\delta_1,\delta_2\}$, then
\begin{equation}\label{des:dku}
\| d_k^U \| \leq c_1\dist(x_k,C^*),
\end{equation}
and
\begin{equation}\label{des:Fku}
\| F(x_k) + F'(x_k) d_k^U \| \le c_2 \dist(x_k,C^*)^2,
\end{equation}
with
\begin{equation}\label{eq:fg56}
c_1 := \sqrt{{L^2}/{(4 \omega^2)} + 1 }, \qquad c_2 := \sqrt{{L^2}/{4} + {L_0}^2}.
\end{equation}
\end{proposition}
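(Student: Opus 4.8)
The plan is to exploit the variational characterization of $d_k^U$: by \eqref{A1:s1} and $\mu_k>0$, the step $d_k^U$ is the unique minimizer of the strongly convex function $\phi(d):=\|F(x_k)+F'(x_k)d\|^2+\mu_k\|d\|^2$. I would then compare the optimal value $\phi(d_k^U)$ with the value of $\phi$ at the trial step $\tilde d:=\bar x_k-x_k$, where $\bar x_k\in C^*$ realizes $\dist(x_k,C^*)$ as in \eqref{dist1}; note that $F(\bar x_k)=0$ since $\bar x_k$ solves \eqref{eq:prob}. Before invoking the hypotheses I would verify the geometric containments: since $x_*\in C^*$ we have $\dist(x_k,C^*)\le\|x_k-x_*\|\le\delta/2$, and the triangle inequality then gives $\|\bar x_k-x_*\|\le\delta$. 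Hence $x_k,\bar x_k\in B(x_*,\delta_1)$ and $x_k\in B(x_*,\delta_2)$, so both Proposition~\ref{prop:lip} and the error bound {\bf (A2)} are applicable at these points.

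Next I would bound $\phi(\tilde d)$. Applying \eqref{eq:IPLM14} with $y=\bar x_k$ and using $F(\bar x_k)=0$ yields $\|F(x_k)+F'(x_k)\tilde d\|\le (L/2)\dist(x_k,C^*)^2$, while the Lipschitz estimate from Proposition~\ref{prop:lip} gives $\|F(x_k)\|=\|F(x_k)-F(\bar x_k)\|\le L_0\,\dist(x_k,C^*)$. Since $\mu_k=\|F(x_k)\|^2$ and $\|\tilde d\|=\dist(x_k,C^*)$, combining these produces
\[
\phi(d_k^U)\le\phi(\tilde d)\le\Big(\tfrac{L^2}{4}+L_0^2\Big)\dist(x_k,C^*)^4 .
\]
Inequality \eqref{des:Fku} then follows at once by discarding the nonnegative term $\mu_k\|d_k^U\|^2$ from $\phi(d_k^U)$ and taking square roots, which recovers $c_2$ exactly.

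The only subtle point is the derivation of \eqref{des:dku}, where matching the constant $c_1$ depends on the order of estimation. Rather than bounding $\mu_k\|d_k^U\|^2\le\phi(\tilde d)$ and dividing by $\mu_k$ at the very end (which would yield the weaker constant $\sqrt{(L^2/4+L_0^2)/\omega^2}$), I would first divide the optimality inequality $\mu_k\|d_k^U\|^2\le\|F(x_k)+F'(x_k)\tilde d\|^2+\mu_k\|\tilde d\|^2$ by $\mu_k=\|F(x_k)\|^2$ to obtain $\|d_k^U\|^2\le\|F(x_k)+F'(x_k)\tilde d\|^2/\|F(x_k)\|^2+\dist(x_k,C^*)^2$. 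Here the error bound {\bf (A2)} in the form $\|F(x_k)\|\ge\omega\,\dist(x_k,C^*)$ is exactly what converts the ratio into $(L^2/4\omega^2)\dist(x_k,C^*)^2$, giving $\|d_k^U\|^2\le(L^2/(4\omega^2)+1)\dist(x_k,C^*)^2$ and hence $c_1$. No single step is genuinely hard; the main care is to use the lower bound on $\|F(x_k)\|$ from {\bf (A2)} in the denominator while using its Lipschitz upper bound $L_0\dist(x_k,C^*)$ in the estimate of $\mu_k\|\tilde d\|^2$ for the companion bound. This explains why both error-bound and smoothness hypotheses enter, and why the constants split as in \eqref{eq:fg56}.
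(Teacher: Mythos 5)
Your proof is correct, and it is essentially the argument the paper defers to: the paper proves this proposition only by citing \cite[Lemma~2.1]{Yamashita2000}, and your derivation (comparing the optimal value of the strongly convex LM subproblem at $d_k^U$ with its value at $\bar x_k - x_k$, then using the Lipschitz bounds for \eqref{des:Fku} and dividing by $\mu_k=\|F(x_k)\|^2$ together with the error bound for \eqref{des:dku}) is precisely the standard proof from that reference, with the constants matching \eqref{eq:fg56}.
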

\begin{proof}
The proof 
can be found in \cite[Lemma~2.1]{Yamashita2000}.
\end{proof}

In the following, we establish some auxiliary results which will be used to prove that the sequences $\{\dist(x_k,C^*)\}$ and $\{x_k\}$ converge.

\begin{lemma}\label{lemma:dist2}
If  $x_k, x_k+d_k^U \in B(x_*,\delta/2)$, where  $\delta = \min\{\delta_1,\delta_2\}$,  then
\begin{equation}\label{eq:taxa}
\dist(x_{k+1},C^*) \leq \theta_k  c_1 \dist(x_k,C^*)+ \dfrac{(2c_2+Lc_1^2)}{2\omega} \dist(x_k,C^*)^2,
\end{equation}
where  $c_1$ and $c_2$ are as in \eqref{eq:fg56}.
\end{lemma}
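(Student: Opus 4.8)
The plan is to bound $\dist(x_{k+1},C^*)$ directly via Proposition~\ref{prop:errproj}, exploiting the fact that the solution set is feasible, i.e. $C^*\subseteq C$. Write $y_k:=x_k+d_k^U$ and let $\bar{y}_k\in C^*$ be a point realizing $\dist(y_k,C^*)$. Since $\bar{y}_k\in C^*\subseteq C$, we have $P_C(\bar{y}_k)=\bar{y}_k$, so applying Proposition~\ref{prop:errproj} with the pair $(y_k,\bar{y}_k)$ and recalling that $x_{k+1}=P_C(y_k,\varepsilon_k)$ gives
\[
\dist(x_{k+1},C^*)\le \|x_{k+1}-\bar{y}_k\|=\|P_C(y_k,\varepsilon_k)-P_C(\bar{y}_k)\|\le \|y_k-\bar{y}_k\|+\sqrt{\varepsilon_k}=\dist(y_k,C^*)+\sqrt{\varepsilon_k}.
\]
This reduces the whole estimate to controlling the two summands separately: the inexactness term $\sqrt{\varepsilon_k}$ will produce the linear term in \eqref{eq:taxa}, and $\dist(y_k,C^*)$ the quadratic one.

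For the first summand, since $\varepsilon_k=\theta_k^2\|d_k^U\|^2$ I have $\sqrt{\varepsilon_k}=\theta_k\|d_k^U\|$, and \eqref{des:dku} from Proposition~\ref{prop:boundsU} (applicable since $x_k\in B(x_*,\delta/2)$) gives $\sqrt{\varepsilon_k}\le \theta_k c_1\dist(x_k,C^*)$, which is exactly the linear term. For the second summand the idea is to feed $\|F(y_k)\|$ into the error bound. First I note $y_k\in B(x_*,\delta/2)\subseteq B(x_*,\delta_2)$, so \textbf{(A2)} yields $\dist(y_k,C^*)\le \omega^{-1}\|F(y_k)\|$. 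Then I split
\[
\|F(y_k)\|\le \|F(x_k+d_k^U)-F(x_k)-F'(x_k)d_k^U\|+\|F(x_k)+F'(x_k)d_k^U\|,
\]
bounding the first norm by $\tfrac{L}{2}\|d_k^U\|^2\le\tfrac{L}{2}c_1^2\dist(x_k,C^*)^2$ through Proposition~\ref{prop:lip} (both points lie in $B(x_*,\delta_1)$) together with \eqref{des:dku}, and the second by $c_2\dist(x_k,C^*)^2$ through \eqref{des:Fku}. Adding these and dividing by $\omega$ gives $\dist(y_k,C^*)\le \tfrac{(2c_2+Lc_1^2)}{2\omega}\dist(x_k,C^*)^2$. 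Substituting both bounds into the first display yields \eqref{eq:taxa}.

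I do not expect a serious obstacle here; the only points requiring care are the verifications that the relevant iterates lie in the correct balls (so that Propositions~\ref{prop:lip} and \ref{prop:boundsU} and assumption \textbf{(A2)} apply), which follow from the hypothesis $x_k,x_k+d_k^U\in B(x_*,\delta/2)$ and the inclusions $B(x_*,\delta/2)\subseteq B(x_*,\delta_1)\cap B(x_*,\delta_2)$ implied by $\delta=\min\{\delta_1,\delta_2\}$. The conceptual crux is recognizing that the nearest solution $\bar{y}_k$ to the \emph{trial point} $y_k=x_k+d_k^U$, rather than the nearest solution $\bar{x}_k$ to $x_k$, is the correct reference point: being feasible it satisfies $P_C(\bar{y}_k)=\bar{y}_k$, which turns the approximate-projection error estimate of Proposition~\ref{prop:errproj} directly into a bound on $\dist(x_{k+1},C^*)$ in terms of $\dist(y_k,C^*)$, precisely the quantity the error bound controls.
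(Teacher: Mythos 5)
Your proposal is correct and follows essentially the same route as the paper's proof: both pass through Proposition~\ref{prop:errproj} (you with an explicit nearest point $\bar y_k\in C^*$, the paper with the infimum over $C^*$, which is the same estimate), then bound $\sqrt{\varepsilon_k}=\theta_k\|d_k^U\|$ by \eqref{des:dku} and $\dist(x_k+d_k^U,C^*)$ via the error bound \textbf{(A2)} combined with the Lipschitz estimate \eqref{eq:IPLM14} and \eqref{des:Fku}. No gaps.
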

\begin{proof}
It follows from \eqref{dist} and \eqref{eq:lm3} that
\begin{equation}\label{eq:IPLM6}
\dist(x_{k+1},C^*) =  \dist( P_{C}(x_k + d_k^U,\varepsilon_k), C^* ) =  \inf_{x \in C^*} \|  P_{C}(x_k + d_k^U,\varepsilon_k) - x \|.
\end{equation}
Since $P_{C}(x)=x$ for every  $x\in C$,  we obtain, from Proposition~\ref{prop:errproj}, that
\begin{align}\label{eq:IPLM7}
\inf_{x \in C^*} \| P_{C}(x_k + d_k^U, \varepsilon_k) - x \| & =   \inf_{x \in C^*}\| P_{C}(x_k + d_k^U, \varepsilon_k) - P_C(x)\|  \nonumber\\
& \leq  \sqrt{\varepsilon_k}  + \inf_{x \in C^*} \| x_k + d_k^U-x \| \nonumber\\
& =  \sqrt{\varepsilon_k} + \dist(x_k + d_k^U, C^*).
\end{align}
 Hence, from \eqref{cond:errb}, \eqref{eq:IPLM6} and \eqref{eq:IPLM7} and the fact that $\varepsilon_k = \theta_k^2 \|d_k^U\|^2$,  we have
\begin{align}\label{eq:IPLM8}
\dist(x_{k+1},C^*) \leq  \sqrt{\varepsilon_k} + \dist(x_k + d_k^U, C^*)
 \leq  \theta_k \| d_k^U \| + \dfrac{1}{\omega} \| F(x_k + d_k^U) \|.
\end{align}
On the other hand, it follows from  \eqref{eq:IPLM14} that
\begin{equation*}
\|F(x_k + d_k^U)\| - \|F(x_k) + F'(x_k)d_k^U\| \leq  \| F(x_k) - F(x_k + d_k^U) + F'(x_k)d_k^U\| \leq \dfrac{L}{2} \|d_k^U\|^2,
\end{equation*}
which, combined with \eqref{eq:IPLM8}, \eqref{des:dku} and  \eqref{des:Fku}, yields
\begin{align*}
\dist(x_{k+1},C^*) &\leq \theta_k \| d_k^U \| + \dfrac{1}{\omega} \left(\|F(x_k) + F'(x_k)d_k^U\| + \dfrac{L}{2} \|d_k^U\|^2 \right)\\
&\leq \theta_k  c_1 \dist(x_k,C^*) + \dfrac{1}{\omega} \left( c_2 \dist(x_k,C^*)^2 + \dfrac{L}{2} c_1^2 \dist(x_k,C^*)^2 \right).
\end{align*}
Therefore, the desired inequality follows trivially from the above one.
\end{proof}

As a consequence of Lemma~\ref{lemma:dist2}, we obtain a useful corollary. 
It  shows that $x_{k+1}$ is closer to $C^*$ than $x_k$ as long as $x_k$ and $x_k + d_k^U$ are in a suitable neighborhood of $x_*$.

\begin{corollary}\label{lemma:dist}
Assume that
 $\theta_k\leq  \bar\theta  < 1/c_1$ and  $x_k, x_{k}+d_k^U\in B(x_*, \sigma/2)$, where
\begin{equation}\label{eq:123}
\sigma < \min\left\{\delta_1,\delta_2, \dfrac{4 \omega(1 - \bar \theta c_1)}{2 c_2 + L c_1^2}\right\}.
\end{equation}
Then,
\begin{equation}\label{ee:456}
\dist(x_{k+1},C^*) \leq  \eta \dist(x_k,C^*),
\end{equation}
with  $\eta:= [\bar\theta c_1  + {(2c_2+Lc_1^2)\sigma}/({4\omega})]\in(0,1)$.
\end{corollary}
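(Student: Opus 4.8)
The plan is to derive Corollary~\ref{lemma:dist} directly from Lemma~\ref{lemma:dist2} by a simple estimation. Since the hypotheses of the corollary, namely $x_k, x_k + d_k^U \in B(x_*,\sigma/2)$ with $\sigma \le \min\{\delta_1,\delta_2\}$, imply the hypotheses of the lemma (because $B(x_*,\sigma/2) \subset B(x_*,\delta/2)$ whenever $\sigma \le \delta = \min\{\delta_1,\delta_2\}$), the inequality \eqref{eq:taxa} is immediately available. I would first invoke it to write
\[
\dist(x_{k+1},C^*) \leq \theta_k c_1 \dist(x_k,C^*) + \frac{2c_2 + Lc_1^2}{2\omega}\dist(x_k,C^*)^2.
\]

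The key step is then to factor out $\dist(x_k,C^*)$ and bound the remaining bracket by the constant $\eta$. Using $\theta_k \le \bar\theta$ on the first term, and for the second term using that $x_k \in B(x_*,\sigma/2)$ forces $\dist(x_k,C^*) \le \|x_k - x_*\| \le \sigma/2$, I would obtain
\[
\dist(x_{k+1},C^*) \leq \left[\bar\theta c_1 + \frac{2c_2 + Lc_1^2}{2\omega}\cdot\frac{\sigma}{2}\right]\dist(x_k,C^*) = \left[\bar\theta c_1 + \frac{(2c_2 + Lc_1^2)\sigma}{4\omega}\right]\dist(x_k,C^*),
\]
which is exactly \eqref{ee:456} with the stated $\eta$.

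What remains is purely to verify that $\eta \in (0,1)$, which is where the precise form of the bound \eqref{eq:123} on $\sigma$ does its work. Positivity is clear since all constants $\bar\theta, c_1, c_2, L, \sigma, \omega$ are nonnegative (and $c_1 > 0$). For the upper bound, the condition $\sigma < 4\omega(1-\bar\theta c_1)/(2c_2 + Lc_1^2)$ rearranges precisely to $(2c_2 + Lc_1^2)\sigma/(4\omega) < 1 - \bar\theta c_1$, i.e. $\bar\theta c_1 + (2c_2 + Lc_1^2)\sigma/(4\omega) < 1$, giving $\eta < 1$. Note this is also where the assumption $\bar\theta < 1/c_1$ is used: it guarantees $1 - \bar\theta c_1 > 0$, so that the bounding interval for $\sigma$ in \eqref{eq:123} is nonempty and a valid $\sigma$ exists.

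There is no real obstacle here; the result is a routine consequence of Lemma~\ref{lemma:dist2}. The only point demanding a moment of care is the observation that the ball hypothesis gives $\dist(x_k,C^*) \le \sigma/2$, which turns the quadratic term into a linear one with the right coefficient — this is the mechanism that converts the quadratic local estimate into a genuine linear contraction. I would present the proof in three compact moves: apply the lemma, substitute the two bounds $\theta_k \le \bar\theta$ and $\dist(x_k,C^*) \le \sigma/2$, and finally confirm $\eta \in (0,1)$ using \eqref{eq:123} together with $\bar\theta < 1/c_1$.
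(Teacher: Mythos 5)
Your proof is correct and follows exactly the route the paper takes: invoke Lemma~\ref{lemma:dist2}, bound $\theta_k$ by $\bar\theta$ and $\dist(x_k,C^*)$ by $\sigma/2$ (via $\dist(x_k,C^*)\le\|x_k-x_*\|$), and then read $\eta\in(0,1)$ off the restriction \eqref{eq:123} on $\sigma$. The paper states this more tersely; your write-up simply fills in the same details.
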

\begin{proof}
First, the  inequality in \eqref{ee:456} follows   from Lemma~\ref{lemma:dist2} and the facts that $ \theta_k\leq \bar \theta$ and $x_k \in B(x_*, \sigma/2)$. Now, in view of the definition $\sigma$ in \eqref{eq:123}, we trivially have $\eta\in(0,1)$.
\end{proof}

In the next lemma, we will prove that for an initial point sufficiently close to the solution set,
the sequences $\{x_k\}$ and $\{x_k + d_k^U\}$ are contained in $B(x_*, \sigma/2)$.

\begin{lemma}\label{le:cap}
Assume that $\theta_k\leq  \bar\theta  < 1/c_1$, for every $k\geq0$, and define
\begin{equation}\label{def:r}
r :=  \dfrac{\sigma(1 - \eta)}{2(1+c_1)[1 - \eta +(1+ \bar \theta)c_1]},
\end{equation}
where $c_1$,  $\sigma$ and $\eta$ are as in Proposition~\ref{prop:boundsU}  and  Corollary~\ref{lemma:dist}.
If $x_0 \in B(x_*, r) \cap C$, then $x_k, x_{k}+d_k^U\in B(x_*, \sigma/2)$ for every $k\geq0$.
\end{lemma}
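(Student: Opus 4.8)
The plan is to argue by induction on $k$, establishing \emph{simultaneously} the two statements
\[
(\mathrm{i})\quad x_k,\;x_k+d_k^U\in B(x_*,\sigma/2), \qquad (\mathrm{ii})\quad \dist(x_k,C^*)\le \eta^{\,k}\,\dist(x_0,C^*),
\]
where $\eta\in(0,1)$ is the contraction factor of Corollary~\ref{lemma:dist}. Statement (ii) is the engine: it records the geometric decay of the distances, and it is precisely this decay that keeps the whole orbit trapped in $B(x_*,\sigma/2)$. Carrying (ii) alongside (i) is what makes the induction self-sustaining.

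Before the induction I would record two ingredients. First, since $x_0\in C$ and each $x_{k+1}=P_C(x_k+d_k^U,\varepsilon_k)\in C$ by \eqref{eq:condpro}, every iterate lies in $C$, so $P_C(x_k)=x_k$. Applying Proposition~\ref{prop:errproj} with $y=x_k$, together with $\varepsilon_k=\theta_k^2\|d_k^U\|^2$, yields the per-step displacement bound
\[
\|x_{k+1}-x_k\|\le \|d_k^U\|+\sqrt{\varepsilon_k}=(1+\theta_k)\|d_k^U\|\le (1+\bar\theta)\|d_k^U\|.
\]
Second, whenever $x_k\in B(x_*,\sigma/2)\subset B(x_*,\delta/2)$ (note $\sigma<\delta$ by \eqref{eq:123}), the estimate \eqref{des:dku} gives $\|d_k^U\|\le c_1\dist(x_k,C^*)\le c_1\|x_k-x_*\|$, since $x_*\in C^*$.

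For the base case, a direct inspection of the definition \eqref{def:r} shows $r<\sigma/2$ and $(1+c_1)r\le\sigma/2$; the first yields $x_0\in B(x_*,\sigma/2)$, and the second, combined with $\|x_0+d_0^U-x_*\|\le(1+c_1)\|x_0-x_*\|\le(1+c_1)r$, yields $x_0+d_0^U\in B(x_*,\sigma/2)$, while (ii) is trivial at $k=0$. For the inductive step, assuming (i)--(ii) up to index $k$, Corollary~\ref{lemma:dist} applies at each $j\le k$ and gives $\dist(x_{k+1},C^*)\le\eta\,\dist(x_k,C^*)\le\eta^{\,k+1}\dist(x_0,C^*)$, which is (ii) at $k+1$. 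To obtain $x_{k+1}\in B(x_*,\sigma/2)$ I would telescope and sum the geometric series,
\[
\|x_{k+1}-x_*\|\le \|x_0-x_*\|+\sum_{j=0}^{k}\|x_{j+1}-x_j\|\le r+(1+\bar\theta)c_1\,\dist(x_0,C^*)\,\frac{1}{1-\eta},
\]
using the per-step bound, (ii), and $\dist(x_0,C^*)\le r$; the value of $r$ in \eqref{def:r} is chosen exactly so that the right-hand side collapses to $\sigma/\bigl(2(1+c_1)\bigr)$. Finally, $x_{k+1}\in B(x_*,\sigma/2)$ together with \eqref{des:dku} gives $\|x_{k+1}+d_{k+1}^U-x_*\|\le(1+c_1)\|x_{k+1}-x_*\|\le\sigma/2$, completing (i) at $k+1$.

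The main obstacle is the circularity the bookkeeping must resolve: the contraction estimate of Corollary~\ref{lemma:dist} presupposes that both $x_k$ and $x_k+d_k^U$ already lie in $B(x_*,\sigma/2)$, yet proving that $x_{k+1}$ remains in that ball requires summing \emph{all} previous displacements. The two are reconciled only because those displacements decay geometrically through (ii), so the telescoped tail is dominated by a convergent series with ratio $\eta$. The seemingly opaque constant $r$ is then exactly what remains after insisting that the geometric-series bound land at $\sigma/(2(1+c_1))$, leaving precisely the slack $c_1\|x_{k+1}-x_*\|$ needed to absorb the extra step $d_{k+1}^U$ and still fit inside $B(x_*,\sigma/2)$. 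Everything else reduces to routine algebra with the constants.
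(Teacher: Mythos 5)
Your proof is correct and follows essentially the same route as the paper's: induction on $k$, the base case from $r<\sigma/\bigl(2(1+c_1)\bigr)$, the inductive step by summing the displacements $(1+\bar\theta)\|d_l^U\|\le(1+\bar\theta)c_1\eta^l r$ into a geometric series that collapses to $\sigma/\bigl(2(1+c_1)\bigr)$, and then absorbing $\|d_{k+1}^U\|$ via the factor $(1+c_1)$. The only cosmetic difference is that you carry the decay estimate $\dist(x_k,C^*)\le\eta^k\dist(x_0,C^*)$ explicitly in the induction hypothesis, whereas the paper rederives it inside the inductive step from Corollary~\ref{lemma:dist}.
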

\begin{proof}
We will proceed by induction on $k$.  Since  $r < \sigma/2 < \delta/2$, where \linebreak$\delta = \min\{\delta_1,\delta_2\}$, we have
\[ [B(x_*, r)\cap C]\subset [B(x_*, \sigma/2)\cap C]\subset [B(x_*, \delta/2)\cap C]. \]
Therefore,  $x_0 \in B(x_*, \sigma/2)\cap C$. Moreover, using \eqref{des:dku}, we obtain
\begin{equation*}
 \|x_0+d_0^U- x_*\|\leq  \|x_0- x_*\|+\|d_0^U\|\leq r+  c_1 \dist(x_0,C^*)\leq r+  c_1 \|x_0 - x_*\| \leq   (1+c_1) r.
\end{equation*}
Since, in particular,  $r< \sigma/[2(1+c_1)]$, we conclude that $x_0+d_0^U \in B(x_*,\sigma/2)$. Now, suppose that $x_l,  x_{l}+d_l^U \in B(x_*, \sigma/2)$ for all $l = 0, \ldots, k$ and let us show that $x_{k+1},  x_{k+1}+d_{k+1}^U\in B(x_*, \sigma/2)$. Using \eqref{eq:lm3} and Proposition~\ref{prop:errproj}, we find that
\begin{equation*}
\|x_{k+1} - x_*\| = \|P_{C}(x_k + d_k^U, \varepsilon_k) - P_{C}(x_*)\| \leq \|x_k + d_k^U - x_*\| + \sqrt{\varepsilon_k}.
\end{equation*}
By the  triangle inequality and the facts that $\varepsilon_k = \theta_k^2 \|d_k^U\|^2$ and $\theta_k\leq  \bar\theta $, we obtain
\begin{equation*}
\|x_{k+1} - x_*\|   \leq \|x_k -  x_*\| + (1 + \bar \theta)\|d_k^U\|.
\end{equation*}
Hence,
\begin{equation}\label{eq:IPLM13}
\|x_{k+1} - x_*\| \leq \|x_0 -  x_*\| + (1 + \bar \theta) \sum_{l = 0}^k\|d_l^U\| \leq r + (1 +\bar \theta) c_1 \sum_{l = 0}^k \dist(x_l,C^*),
\end{equation}
where the last inequality follows from the facts that $x_l \in B(x_*, \sigma/2)\subset B(x_*, \delta/2)$, for all $l = 0, \ldots, k$, 
$x_0 \in B(x_*, r)$ and \eqref{des:dku}.
On the other hand, since $x_l,  x_{l}+d_l^U \in B(x_*, \sigma/2)$ for all $l = 0, \ldots, k$, by Corollary~\ref{lemma:dist}, we have
\begin{equation}\label{eq:IPLM1}
\dist(x_l,C^*) \leq \eta \dist(x_{l-1},C^*) \leq \eta^2 \dist(x_{l-2},C^*) \leq \ldots \leq \eta^l \dist(x_{0},C^*) \leq \eta^l r,
\end{equation}
for all $l = 0, \ldots, k$, where the last inequality is due to the fact that $x_0 \in B(x_*, r)\cap C$.
Hence, it follows from \eqref{eq:IPLM13} and \eqref{eq:IPLM1} that
\begin{equation*}
\|x_{k+1} - x_*\| \leq r + (1 + \bar \theta) c_1 r \sum_{l = 0}^{\infty} \eta^l.
\end{equation*}
Since $\eta \in (0,1)$, we have $
\sum_{l = 0}^{\infty} \eta^l = {1}/{(1 - \eta)}.
$
Hence, using the last inequality, we obtain
\begin{align}\label{eq:IPLM16}
\|x_{k+1} - x_*\| \leq r +  \frac{(1 + \bar \theta) c_1 r}{1 - \eta}.
\end{align}
As, in particular,  $r < (1 - \eta)\sigma/[2(1-\eta + (1+\bar \theta)c_1)]$, we conclude that $x_{k+1}\in B(x_*, \sigma/2)$.
It remains to  prove that $x_{k+1}+d_{k+1}^U\in B(x_*, \sigma/2)$.  Since  $x_{k+1}\in B(x_*, \sigma/2)$, it follows from \eqref{des:dku} that
\begin{equation*}\label{eq:IPLM17}
\|x_{k+1} + d_{k+1}^U - x_*\| \leq \|x_{k+1} - x_*\| + \|d_{k+1}^U\| \leq (1 + c_1) \|x_{k+1} - x_*\|,
\end{equation*}
which, combined with   \eqref{eq:IPLM16} and the definition of $r$ in  \eqref{def:r}, yields
\begin{equation*}\label{eq:IPLM18}
\|x_{k+1} + d_{k+1}^U - x_*\| \leq (1 + c_1) \left[r +  \frac{(1 + \bar \theta) c_1 r}{1 - \eta}\right] = \dfrac{\sigma}{2},
\end{equation*}
i.e., $x_{k+1}+d_{k+1}^U\in B(x_*, \sigma/2)$ and then the proof is complete.
\end{proof}

We are now ready to prove the convergence of the sequences $\{\dist(x_k, C^*)\}$ and $\{x_k\}$.

\begin{theorem}\label{conv:dist}
Assume that $\theta_k\leq  \bar\theta  < 1/c_1$, for every $k\geq0$.
Let $\{x_k\}$ be the sequence generated by the LMM-IP with starting point $x_0 \in B(x_*, r) \cap C$, where $r$ is as in \eqref{def:r}.
Then,
\begin{itemize}
\item[(a)] the sequence $\{\dist(x_k, C^*)\}$ converges to zero linearly. If, additionally,  $\lim_{k\to \infty}\theta_k=0$, the convergence is superlinear;
\item[(b)] the sequence $\{x_k\}$ converges to a point belonging to $ C^*$.
\end{itemize}
\end{theorem}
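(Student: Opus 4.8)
The plan is to assemble the groundwork already laid in Lemma~\ref{le:cap}, Corollary~\ref{lemma:dist}, and Lemma~\ref{lemma:dist2}. The crucial preliminary observation is that, since $x_0 \in B(x_*, r) \cap C$, Lemma~\ref{le:cap} guarantees $x_k, x_k + d_k^U \in B(x_*, \sigma/2)$ for every $k \ge 0$; hence all hypotheses needed to invoke Corollary~\ref{lemma:dist} and Lemma~\ref{lemma:dist2} hold at every iteration, and I may use their conclusions freely along the whole sequence. I would also record at the outset that if $\dist(x_k, C^*) = 0$ for some $k$, then $x_k \in C^*$ (since $C^*$ is closed, being the intersection of the closed set $C$ with the closed zero set of the continuous map $F$), so $F(x_k) = 0$ and the method terminates finitely at Step~1. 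Thus it suffices to treat the case $\dist(x_k, C^*) > 0$ for all $k$.

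For part (a), linear convergence is immediate: applying Corollary~\ref{lemma:dist} at each step yields $\dist(x_{k+1}, C^*) \le \eta\,\dist(x_k, C^*)$ with $\eta \in (0,1)$, whence $\dist(x_k, C^*) \le \eta^k \dist(x_0, C^*) \to 0$. For the superlinear rate under $\lim_{k\to\infty}\theta_k = 0$, I would instead divide the finer estimate \eqref{eq:taxa} of Lemma~\ref{lemma:dist2} by $\dist(x_k, C^*)$, obtaining
\[
\frac{\dist(x_{k+1}, C^*)}{\dist(x_k, C^*)} \le \theta_k c_1 + \frac{2c_2 + Lc_1^2}{2\omega}\,\dist(x_k, C^*).
\]
Both terms on the right tend to $0$ (the first because $\theta_k \to 0$, the second because part (a) already gives $\dist(x_k, C^*) \to 0$), so the ratio tends to zero, which is precisely superlinear convergence.

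For part (b), the key is to control consecutive iterates. Taking $y = x_k \in C$ in Proposition~\ref{prop:errproj} and using $P_C(x_k) = x_k$ together with $\varepsilon_k = \theta_k^2 \|d_k^U\|^2$ gives
\[
\|x_{k+1} - x_k\| = \|P_C(x_k + d_k^U, \varepsilon_k) - P_C(x_k)\| \le \|d_k^U\| + \sqrt{\varepsilon_k} = (1 + \theta_k)\|d_k^U\| \le (1 + \bar\theta)\, c_1\, \dist(x_k, C^*),
\]
where the last step uses \eqref{des:dku}. Combining with the bound $\dist(x_k, C^*) \le \eta^k \dist(x_0, C^*)$ from part (a) shows $\|x_{k+1} - x_k\| \le (1 + \bar\theta) c_1 \eta^k \dist(x_0, C^*)$, so $\sum_k \|x_{k+1} - x_k\|$ is dominated by a convergent geometric series. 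Hence $\{x_k\}$ is Cauchy and converges to some $\hat x$; since $C$ is closed and each $x_k \in C$, we have $\hat x \in C$, and since $x \mapsto \dist(x, C^*)$ is continuous, $\dist(\hat x, C^*) = \lim_k \dist(x_k, C^*) = 0$, giving $\hat x \in C^*$ by closedness of $C^*$.

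The steps are all short consequences of the preceding results, so I do not anticipate a genuine obstacle; the only points requiring mild care are the finite-termination/positivity caveat needed to divide by $\dist(x_k, C^*)$ in the superlinear argument, and the elementary but essential idea in part (b) that bounding $\|x_{k+1} - x_k\|$ by a summable geometric sequence (rather than merely re-establishing $\dist(x_k, C^*) \to 0$) is exactly what upgrades convergence of the distances to convergence of the iterates themselves.
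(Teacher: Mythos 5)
Your proposal is correct and follows essentially the same route as the paper: part (a) via Corollary~\ref{lemma:dist} together with Lemma~\ref{le:cap} and then dividing \eqref{eq:taxa} by $\dist(x_k,C^*)$, and part (b) via a Cauchy-sequence argument built on Proposition~\ref{prop:errproj}, \eqref{des:dku}, and the geometric decay of $\dist(x_k,C^*)$. The only cosmetic differences are that the paper telescopes $\|x_p-x_q\|$ directly rather than summing consecutive increments, and it concludes $\hat x\in C^*$ from the error bound sandwich $\omega\,\dist(x_k,C^*)\le\|F(x_k)\|\le L_0\,\dist(x_k,C^*)$ rather than from continuity of $\dist(\cdot,C^*)$ and closedness of $C^*$; both endgames are valid.
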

\begin{proof}
(a) The first part follows immediately from Corollary~\ref{lemma:dist} and Lemma~\ref{le:cap}.  Now, the second one  follows by dividing \eqref{eq:taxa} by $\dist(x_k,C^*)$
and taking the limit as $k\to \infty$.
\\[2mm]
(b) Since $\{\dist(x_k, C^*)\}$ converges to zero and $\{x_k\} \subset B(x_*,\sigma/2)\cap C $, it suffices to show that $\{x_k\}$ converges.
Let us prove that $\{x_k\}$ is a Cauchy sequence. To this end, take $p,q \in \mathbb{N}$ with $p \geq q$. It follows from Proposition~\ref{prop:errproj} and the facts that $\varepsilon_k = \theta_k^2 \|d_k^U\|^2$ and   $\{x_k\} \subset  C $ that
\begin{align*}
\|x_p - x_q\| &= \|P_{C}(x_{p-1} + d_{p-1}^U, \varepsilon_{p-1}) - P_{C}(x_q)\| \\
&\leq \|x_{p-1} + d_{p-1}^U - x_q\| +  \theta_{p-1} \|d_{p-1}^U\|\\
&\leq \|x_{p-1} - x_q\| + (1+ \theta_{p-1}) \|d_{p-1}^U\|,
\end{align*}
 Repeating the process above, we get
\begin{align*}
\|x_p - x_q\| &\leq  (1+ \theta_{q}) \|d_{q}^U\| + \ldots  +(1+ \theta_{p-2}) \|d_{p-2}^U\| +  (1+\theta_{p-1}) \|d_{p-1}^U\|,
\end{align*}
which, combined with the fact $\theta_k\leq  \bar\theta$, for every $k\geq0$, yields
\begin{align*}
\|x_p - x_q\| &\leq  (1+ \bar\theta)  \sum_{l = q}^{p-1}\|d_{l}^U\| \leq (1+\bar\theta) \sum_{l = q}^{\infty}\|d_{l}^U\|.
\end{align*}
Now, by \eqref{des:dku} and \eqref{eq:IPLM1}, we have
\begin{align*}
\|d_{l}^U\| &\leq  c_1 \dist(x_l,C^*) \leq c_1 \eta^l r.
\end{align*}
Combining the last two inequalities, we obtain
\begin{align*}
\|x_p - x_q\| & \leq (1+ \bar\theta)c_1  r \sum_{l = q}^{\infty}\eta^l=(1+ \bar\theta)c_1  r\left( \sum_{l = 0}^{\infty}\eta^l- \sum_{l = 0}^{q-1}\eta^l \right).
\end{align*}
As $\eta\in (0,1)$, taking the limit in the last inequality as $q\rightarrow\infty$, we obtain $\|x_p - x_q\|\to 0$. Therefore,
 $\{x_k\}$ is a Cauchy sequence and hence it converges. Let $\bar{x} = \lim_{k \rightarrow \infty} x_k$.
Since $x_k \in C, \forall k$, and $C$ is closed, then $\bar{x} \in C$.
Moreover, because $\omega \dist(x_k, C^*) \le \| F(x_k) \| \le L_0 \dist(x_k, C^*)$ and $\dist(x_k, C^*) \rightarrow 0$
as $k \rightarrow \infty$, we conclude that $\bar{x} \in C^*$.
\end{proof}

Before analyzing the convergence rates of the sequence $\{x_k\}$, let us first establish the following result.

\begin{lemma}\label{lemma:dku}
Assume that $\theta_k\leq  \bar\theta $ for every $k\geq0$ with
\begin{equation}\label{bar_theta}
\bar \theta <  \dfrac{-(1+ 4 c_1) + \sqrt{(1 + 4 c_1)^2 + 8}}{8c_1},
\end{equation}
where $c_1,c_2$ are as in \eqref{eq:fg56}. Let $r$ be as in \eqref{def:r} and
$\{x_k\}$ be the sequence generated by the LMM-IP with starting point $x_0 \in B(x_*, r) \cap C$ converging to its limit point $\bar x$. Then, for all $k \in \mathbb{N}$ sufficiently large, there exist positive constants $c_3$, $c_4$ and $c_5$ such that
\begin{itemize}
\item[(a)] $\dist(x_k,C^*)\leq c_3\|d_k^U\|$;
\item[(b)] $\|d_{k+1}^U\|\leq \theta_k  c_1^2 c_3\|d_{k}^U\| +  c_4\|d_{k}^U\|^2      \leq \bar \theta  c_1^2 c_3\|d_{k}^U\| +  c_4\|d_{k}^U\|^2$;
\item[(c)] $c_5\|x_k - \bar x\| \leq \|d_k^U\| \leq c_1 \|x_k - \bar x\|.$
\end{itemize}
\end{lemma}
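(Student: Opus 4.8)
The plan is to establish the three estimates in the stated order, since (b) is built on (a) and (c) on (b). Throughout I would invoke the results already proved under the running hypothesis $x_0\in B(x_*,r)\cap C$: Lemma~\ref{le:cap} (so that $x_k,\,x_k+d_k^U\in B(x_*,\sigma/2)$ for every $k$), Corollary~\ref{lemma:dist} (linear decrease $\dist(x_{k+1},C^*)\le\eta\,\dist(x_k,C^*)$), and Theorem~\ref{conv:dist} (so that $\bar x=\lim_k x_k\in C^*$ and $\dist(x_k,C^*),\|d_k^U\|\to 0$). One should first record that \eqref{bar_theta} forces $\bar\theta<1/c_1$, so all of these earlier results indeed apply. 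For part (a), which is a lower bound on $\|d_k^U\|$, the cleanest route is to write $\dist(x_k,C^*)\le\|x_k-x_{k+1}\|+\dist(x_{k+1},C^*)$, bound the first term by $(1+\bar\theta)\|d_k^U\|$ via Proposition~\ref{prop:errproj} (exactly the single-step version of the Cauchy estimate used in the proof of Theorem~\ref{conv:dist}) and the second by $\eta\,\dist(x_k,C^*)$ via Corollary~\ref{lemma:dist}; rearranging gives $(1-\eta)\dist(x_k,C^*)\le(1+\bar\theta)\|d_k^U\|$, i.e.\ (a) with $c_3=(1+\bar\theta)/(1-\eta)$. An analytic alternative, genuinely needing $k$ large, is to note from \eqref{des:Fku} and \eqref{cond:errb} that $\|F'(x_k)d_k^U\|\ge(\omega-c_2\dist(x_k,C^*))\dist(x_k,C^*)\ge\tfrac{\omega}{2}\dist(x_k,C^*)$ for $k$ large, then divide by $\|F'(x_k)\|\le L_0$.

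For part (b) I would start from \eqref{des:dku} at index $k+1$, namely $\|d_{k+1}^U\|\le c_1\dist(x_{k+1},C^*)$, and insert the one-step bound \eqref{eq:taxa} of Lemma~\ref{lemma:dist2}, obtaining $\|d_{k+1}^U\|\le\theta_k c_1^2\dist(x_k,C^*)+\tfrac{c_1(2c_2+Lc_1^2)}{2\omega}\dist(x_k,C^*)^2$. Substituting part (a) into the linear term once and into the quadratic term squared yields $\|d_{k+1}^U\|\le\theta_k c_1^2 c_3\|d_k^U\|+c_4\|d_k^U\|^2$ with $c_4:=c_1(2c_2+Lc_1^2)c_3^2/(2\omega)$; the second inequality in (b) is then immediate from $\theta_k\le\bar\theta$.

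Part (c) splits in two. The upper bound is immediate: since $\bar x\in C^*$ we have $\dist(x_k,C^*)\le\|x_k-\bar x\|$, so \eqref{des:dku} gives $\|d_k^U\|\le c_1\|x_k-\bar x\|$. For the lower bound I would telescope $x_p-x_k=\sum_{l=k}^{p-1}(x_{l+1}-x_l)$, bound each increment by $(1+\bar\theta)\|d_l^U\|$ as in part (a), and let $p\to\infty$ to get $\|x_k-\bar x\|\le(1+\bar\theta)\sum_{l\ge k}\|d_l^U\|$; it then remains to show this tail is geometric. From part (b), for $l\ge k$ with $k$ large, $\|d_{l+1}^U\|\le(\bar\theta c_1^2 c_3+c_4\|d_l^U\|)\|d_l^U\|$, and since $\|d_l^U\|\to 0$ the bracket tends to $\bar\theta c_1^2 c_3$; so for $k$ large enough this ratio is bounded by some $\rho<1$, giving $\sum_{l\ge k}\|d_l^U\|\le\|d_k^U\|/(1-\rho)$ and hence $\|x_k-\bar x\|\le\tfrac{1+\bar\theta}{1-\rho}\|d_k^U\|$, i.e.\ the lower bound with $c_5:=(1-\rho)/(1+\bar\theta)$.

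The step I expect to demand the most care is the very last one: checking that \eqref{bar_theta} really does produce a contraction factor $\rho<1$. This requires controlling the constant $c_3$ from part (a) (equivalently $\eta$, hence the choice of $\sigma$ in \eqref{eq:123}) so that $\bar\theta c_1^2 c_3<1$, and absorbing the vanishing correction $c_4\|d_l^U\|$ uniformly over all large $l$ so that the geometric series converges. A short computation should show that the quadratic inequality implicit in \eqref{bar_theta}, which is purely in $c_1$ and $\bar\theta$, is exactly what guarantees this limiting ratio is below one after the constants are bounded; this is also the source of the ``$k$ sufficiently large'' clause, needed to make the quadratic term negligible.
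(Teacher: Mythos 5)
Your architecture is the same as the paper's: (a) from the one-step decrease of $\dist(\cdot,C^*)$ plus the bound $\|x_{k+1}-x_k\|\le(1+\bar\theta)\|d_k^U\|$ coming from Proposition~\ref{prop:errproj}, (b) by chaining \eqref{des:dku}, Lemma~\ref{lemma:dist2} and item (a) (your $c_4$ is exactly the paper's), and (c) by telescoping and summing a geometric tail. Parts (b) and the whole of (c) except its last step are correct as written.

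The genuine gap is the one you yourself flagged, and with your constants it does not close. Your version of (a) gives $c_3=(1+\bar\theta)/(1-\eta)$ with $\eta=\bar\theta c_1+(2c_2+Lc_1^2)\sigma/(4\omega)$, and the contraction in (c) needs $\bar\theta c_1^2c_3<1$. But $\sigma$ is only constrained by \eqref{eq:123}, so $1-\eta$ can be arbitrarily close to $0$ (take $\sigma$ near its upper bound $4\omega(1-\bar\theta c_1)/(2c_2+Lc_1^2)$), making $c_3$ arbitrarily large; no condition on $\bar\theta$ alone, and in particular not \eqref{bar_theta}, can then force $\bar\theta c_1^2 c_3<1$. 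Your ``analytic alternative'' $c_3=2L_0/\omega$ has the same defect: it involves $L_0$ and $\omega$, which \eqref{bar_theta} knows nothing about. The repair is to exploit the ``$k$ sufficiently large'' clause already in part (a) rather than in part (c): since $\dist(x_k,C^*)\to 0$, Lemma~\ref{lemma:dist2} gives, for large $k$,
\begin{equation*}
\dist(x_k,C^*)-\dist(x_{k+1},C^*)\ \ge\ \Bigl(1-\bar\theta c_1-\tfrac{2c_2+Lc_1^2}{2\omega}\dist(x_k,C^*)\Bigr)\dist(x_k,C^*)\ \ge\ \bigl(\tfrac12-\bar\theta c_1\bigr)\dist(x_k,C^*),
\end{equation*}
so that your triangle-inequality argument yields the $\sigma$-independent constant $c_3=(1+\bar\theta)/(1/2-\bar\theta c_1)$ (note \eqref{bar_theta} forces $\bar\theta<1/(2c_1)$, so this is positive). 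With this $c_3$, condition \eqref{bar_theta} is precisely the statement that $\bar\theta c_1^2c_3<1/4$ (the positive root of $8c_1^2t^2+2c_1(4c_1+1)t-1=0$ is the right-hand side of \eqref{bar_theta}), which leaves the remaining $1/4$ to absorb $c_4\|d_l^U\|$ for $l$ large and gives the clean ratio $\rho=1/2$, hence $c_5=1/[2(1+\bar\theta)]$. With that substitution your proof coincides with the paper's.
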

\begin{proof}
(a) Since $\{x_k\}\subset C$,  we obtain, from  Proposition~\ref{prop:errproj}, that
\begin{equation}\label{eq:LMMIP16}
\|d_k^U\| = \|x_k + d_k^U - x_k\| \geq \|P_C(x_k + d_k^U,\varepsilon_k) - P_C(x_k)\| - \sqrt{\varepsilon_k}.
\end{equation}
Using \eqref{eq:lm3} and the facts that $\varepsilon_k = \theta_k^2 \|d_k^U\|^2$ and $\theta_k\leq  \bar\theta$, we conclude that
\begin{equation*}
\|d_k^U\| \geq \|x_{k+1} - x_k\| - \bar \theta \|d_k^U\|,
\end{equation*}
which implies
\begin{equation*}
(1 + \bar \theta)\|d_k^U\| \geq \|x_{k+1} - x_k\|.
\end{equation*}
Now let $\bar x_{k+1} \in C^*$ satisfying $\dist(x_{k+1},C^*) = \|x_{k+1} - \bar x_{k+1}\|$. Hence, from the previous inequality, we have
\begin{align}\label{eq:LMMIP17}
(1+\bar\theta)\|d_k^U\| &\geq \|\bar x_{k+1} - x_k\| - \|x_{k+1} - \bar x_{k+1}\| \nonumber \\
&\geq \dist(x_k,C^*) - \dist(x_{k+1},C^*) \nonumber \\
&\geq \left[1 - \bar\theta c_1  - \frac{(2c_2 + L c_1^2)}{2\omega}\dist(x_k,C^*)\right] \dist(x_k,C^*),
\end{align}
where the last inequality follows from the Lemma~\ref{lemma:dist2} and fact $\theta_k\leq  \bar\theta$. Since $\{\dist(x_k, C^*)\}$ converges to zero (see,  Theorem~\ref{conv:dist}(a)) we may assume, without loss of generality, that
\begin{equation}\label{LMM-19}
\frac{(2c_2 + L c_1^2)}{2\omega}\dist(x_k,C^*) < \frac{1}{2},
\end{equation}
for all $k \in \mathbb{N}$ sufficiently large. Hence, combining \eqref{eq:LMMIP17} and \eqref{LMM-19}, we have
\begin{align*}
(1+\bar\theta)\|d_k^U\| & \geq \left(\frac{1}{2} - \bar\theta c_1\right) \dist(x_k,C^*),
\end{align*}
which, combined with the fact that  \eqref{bar_theta} implies that $\bar \theta<1/(2c_1)$, proves item(a) with $c_3:= (1+\bar \theta)/(1/2 - \bar\theta c_1)$.
\\[2mm]
(b) It follows from \eqref{des:dku} and Lemma~\ref{lemma:dist2} that
\begin{align*}
\|d_{k+1}^U\| &\leq c_1 \dist(x_{k+1},C^*)\\
&\leq c_1 \left[\theta_k  c_1 \dist(x_k,C^*) + \dfrac{(2c_2 + L c_1^2)}{2 \omega}  \dist(x_k,C^*)^2 \right]\\
& \leq \theta_k  c_1^2 c_3 \|d_{k}^U\| + \dfrac{(2 c_1 c_2 + L c_1^3)c_3^2}{2\omega}  \|d_k^U\|^2,
\end{align*}
where the last inequality follows from item(a). Therefore, using $\theta_k\leq  \bar\theta$,   item(b) follows  with $c_4:= (2 c_1 c_2 + L c_1^3)c_3^2 /(2\omega)$.
\\[2mm]
(c) The second inequality follows easily from \eqref{des:dku}. In order to verify the first inequality, let $k\in \mathbb{N}$ sufficiently large such that item(b) applies and $c_4 \|d_k^U\| < 1/4$ holds. Moreover, it follows from \eqref{bar_theta} that  $\bar \theta  c_1^2 c_3 < 1/4$. Therefore, $\bar \theta  c_1^2 c_3+ c_4 \|d_k^U\| < 1/2$ and hence, from item~(b), we conclude that $\|d_{k+1}^U\| \leq (1/2) \|d_k^U\|$. Hence, for all $j = 0, 1, 2, \ldots,$ we obtain
\begin{equation}\label{eq:LMMIP18}
\|d_{k+j}^U\| \leq \left(\frac{1}{2}\right)^j \|d_k^U\|.
\end{equation}
On the other hand, we have
\begin{align*}
\|x_k - x_{k+l}\| &= \|P_C(x_{k}) - P_C(x_{k+l-1} + d_{k+l-1}^U,\varepsilon_{k+l-1})\|\\
&\leq \|x_k - x_{k+l-1} - d_{k+l-1}^U\| + \sqrt{\varepsilon_{k+l-1}}\\
&\leq \|x_{k} - x_{k+l-1}\| + (1 + \theta_{k+l-1}) \|d_{k+l-1}^U\|.
\end{align*}
Repeating the process above, we get
\begin{align*}
\|x_k - x_{k+l}\| &\leq  (1+ \theta_{k}) \|d_{k}^U\| + \ldots  +(1+ \theta_{k+l-2}) \|d_{k+l-2}^U\| +  (1+\theta_{k+l-1}) \|d_{k+l-1}^U\|,
\end{align*}
which, combined with the fact $\theta_k\leq  \bar\theta$, for every $k \geq 0$, and \eqref{eq:LMMIP18}, yields
\begin{align}\label{LMNIP:eq19}
\|x_k - x_{k+l}\| &\leq  (1+ \bar\theta)  \sum_{j = 0}^{l-1}\|d_{k+j}^U\| \leq (1+ \bar\theta) \|d_k^U\|\sum_{j = 0}^{l-1} \left(\frac{1}{2}\right)^j .
\end{align}
Taking the limit in \eqref{LMNIP:eq19} as $l\rightarrow\infty$, we obtain
\begin{equation*}
\|x_k - \bar x\| = \lim_{l\rightarrow \infty} \|x_k - x_{k+l}\| \leq (1+ \bar\theta) \|d_k^U\|\sum_{j = 0}^{\infty} \left(\frac{1}{2}\right)^j.
\end{equation*}
Since $\sum_{j = 0}^{\infty} \left(\frac{1}{2}\right)^j = 2$, we conclude, from inequality above, that
\begin{equation*}
\|x_k - \bar x\| \leq 2 (1+ \bar\theta) \|d_k^U\|,
\end{equation*}
which implies the item(c) with $c_5: = 1/[2(1 + \bar \theta)]$.
\end{proof}

The following theorem proves the local convergence rate of the sequence $\{x_k\}$ generated by the LMM-IP.


\begin{theorem}
There exist a positive constant $\alpha$ such that if  $ \theta_k \in [0, \alpha)$ for every $k\geq0$, then  the sequence $\{x_k\}$  converges linearly to its limit point $\bar x$.   If, additionally,  $\lim_{k\to \infty}\theta_k=0$,  the convergence is superlinear.
\end{theorem}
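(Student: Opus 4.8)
The plan is to derive a genuinely Q-linear (and, under the extra hypothesis, Q-superlinear) recursion for $\{\|x_k-\bar x\|\}$ by transporting the step-size recursion of Lemma~\ref{lemma:dku}(b) through the two-sided estimate of Lemma~\ref{lemma:dku}(c). First I would pin down the constant $\alpha$. Since Lemma~\ref{lemma:dku} already needs \eqref{bar_theta}, I take $\alpha$ no larger than the right-hand side of \eqref{bar_theta}, so that items (a)--(c) of that lemma are in force with $\bar\theta:=\alpha$. In addition I impose the smallness condition
\[
2(1+\alpha)\,c_1^{3}\,c_3\,\alpha < 1, \qquad c_3=\frac{1+\alpha}{1/2-\alpha c_1},
\]
where $c_3$ is the constant produced in Lemma~\ref{lemma:dku}(a). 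Such an $\alpha>0$ exists because, as $\alpha\to 0^{+}$, one has $c_3\to 2$ and the left-hand side tends to $0$; this is exactly the reason a \emph{new}, possibly smaller, threshold $\alpha$ appears in the statement.

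Assuming $\theta_k\le\bar\theta=\alpha$ for all $k$, I then chain the three parts of Lemma~\ref{lemma:dku}. The first inequality of Lemma~\ref{lemma:dku}(c) gives $\|x_{k+1}-\bar x\|\le 2(1+\bar\theta)\,\|d_{k+1}^U\|$ (recall $c_5=1/[2(1+\bar\theta)]$). Bounding $\|d_{k+1}^U\|$ by Lemma~\ref{lemma:dku}(b) and then invoking the second inequality of Lemma~\ref{lemma:dku}(c), namely $\|d_k^U\|\le c_1\|x_k-\bar x\|$, I obtain
\[
\|x_{k+1}-\bar x\| \le 2(1+\bar\theta)\,c_1\,\big[\,\bar\theta\, c_1^{2} c_3 + c_4\|d_k^U\|\,\big]\,\|x_k-\bar x\|.
\]
By Theorem~\ref{conv:dist} we have $x_k\to\bar x$, hence $\|d_k^U\|\le c_1\|x_k-\bar x\|\to 0$; therefore, for all $k$ sufficiently large, the bracketed quantity is as close as desired to $\bar\theta\,c_1^{2}c_3$, and the full coefficient is as close as desired to $2(1+\bar\theta)c_1^{3}c_3\bar\theta$, which is $<1$ by the choice of $\alpha$. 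This produces a constant $q\in(0,1)$ with $\|x_{k+1}-\bar x\|\le q\,\|x_k-\bar x\|$ for all large $k$, i.e. linear convergence.

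For the superlinear part I would run the identical computation but retain $\theta_k$ in place of $\bar\theta$ when applying Lemma~\ref{lemma:dku}(b), giving
\[
\frac{\|x_{k+1}-\bar x\|}{\|x_k-\bar x\|} \le 2(1+\bar\theta)\,c_1\,\big[\,\theta_k\, c_1^{2} c_3 + c_4\|d_k^U\|\,\big].
\]
The quotient is well defined since the algorithm has not stopped, so $x_k\neq\bar x$ and $d_k^U\neq 0$. When $\lim_{k\to\infty}\theta_k=0$, both $\theta_k$ and $\|d_k^U\|$ tend to $0$, so the right-hand side tends to $0$ and the convergence is superlinear.

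The main obstacle is not any individual estimate—each is obtained simply by composing Lemma~\ref{lemma:dku}(a)--(c)—but rather the bookkeeping of constants. The naive transport $\|x_k-\bar x\|\rightsquigarrow\|d_k^U\|\rightsquigarrow\|d_{k+1}^U\|\rightsquigarrow\|x_{k+1}-\bar x\|$ incurs the amplification factor $2(1+\bar\theta)c_1^{3}c_3$, which may exceed $1$ for a $\bar\theta$ that merely satisfies \eqref{bar_theta}. Taming this factor is precisely what forces the sharper threshold $\alpha$, and the delicate point is to verify that a strictly positive such $\alpha$ exists, which follows from the limiting behavior of $c_3$ as $\alpha\to 0^{+}$.
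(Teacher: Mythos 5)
Your proposal is correct and follows essentially the same route as the paper: both chain Lemma~\ref{lemma:dku}(b) with the two-sided bound of Lemma~\ref{lemma:dku}(c) to get the asymptotic contraction factor $2(1+\alpha)c_1^3c_3\alpha = 4c_1^3\alpha(1+\alpha)^2/(1-2\alpha c_1)$, argue by continuity at $\alpha=0$ that a positive threshold making this factor less than one exists, and obtain superlinearity by retaining $\theta_k$ in place of $\bar\theta$. Your version is in fact slightly cleaner in stating the ratio bound as an inequality (a $\limsup$) rather than the paper's equality of limits.
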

\begin{proof}    Let $\alpha_1$ be  such that $\alpha_1 < { \left({-(1+ 4 c_1) + \sqrt{(1 + 4 c_1)^2 + 8}}\right)/({8c_1}})$.
Hence, if  $\ \theta_k \in [0, \alpha_1)$, it follows from  items (b) and (c) of Lemma~\ref{lemma:dku} with $\bar \theta=\alpha_1$ that
\begin{equation}\label{des:key}
c_5 \|x_{k+1} - \bar x\| \leq \|d_{k+1}^U\|  \leq   \alpha_1  c_1^2 c_3  \|x_k - \bar x\| + c_1^2 c_4 \|x_k - \bar x\|^2,
\end{equation}
where $c_3:= (1+\alpha_1)/(1/2 - \alpha_1 c_1)$, $c_4:= (2 c_1 c_2 + L c_1^3)c_3^2 /(2\omega)$ and \linebreak$c_5: = 1/[2(1 + \alpha_1)]$.
Hence, dividing the last inequality by $\|x_k - \bar x\|$ and taking limit as $k \rightarrow \infty$, results in
$$
\lim_{k \rightarrow \infty} \dfrac{\|x_{k+1} - \bar x\|}{\|x_k - \bar x\|}= \dfrac{4c_1^3 \alpha_1(1 + \alpha_1)^2}{(1 - 2 \alpha_1  c_1)},
$$
which implies $\{x_k\}$ converges linearly to $\bar{x}$ as long as 
$$
{4c_1^3 \alpha_1 (1 + \alpha_1)^2}/[{(1 - 2 \alpha_1 c_1)}] < 1.
$$
Let $g:[0,+\infty) \rightarrow \mathbb{R}$ be defined by
\begin{equation*}
g(\alpha) := \dfrac{4c_1^3 \alpha (1 + \alpha)^2}{(1 - 2 \alpha c_1)} - 1.
\end{equation*}
Since $g(0) = -1 < 0$ and $g$ is a continuous function, there exists a positive constant $\alpha_2$ such that for all $\alpha\in [0,\alpha_2)$, 
we have  $g( \alpha) < 0$, and hence  ${4c_1^3 \alpha (1 + \alpha)^2}/[{(1 - 2 \alpha c_1)}] < 1$. 
Therefore, the result now  follows by taking $\alpha=\min\{\alpha_1,\alpha_2\}$.

Let us now prove the second part. Similarly to the first part, it can be proven that  if $ \theta_k \in [0, \alpha_1)$ for every $k\geq0$, 
where $$
\alpha_1 < { \left({-(1+ 4 c_1) + \sqrt{(1 + 4 c_1)^2 + 8}}\right)/({8c_1}}),
$$
then
$$
\lim_{k \rightarrow \infty} \dfrac{\|x_{k+1} - \bar x\|}{\|x_k - \bar x\|}= \dfrac{4c_1^3 (1 + \alpha_1)^2}{(1 - 2 \alpha_1  c_1)}\lim_{k \rightarrow \infty} \theta_k.
$$
As  $\lim_{k\to \infty}\theta_k=0$, the last equality implies the superlinear  convergence of the sequence $\{x_k\}$.
\end{proof}

\section{Global version of the  LMM-IP}\label{global}

In this section, our aim is to propose and analyze  a global version of the  Levenberg-Marquardt method with inexact projections  studied in the previous section.
Basically, the global method consists of combining  the  local LMM-IP method, safeguarded with inexact projected gradient steps, 
with the nonmonotone line search technique of \cite{grippo1986}, 
in order to guarantee a nonmonotone decrease of the merit function
\begin{equation}\label{pro2}
f(x) := \frac{1}{2} \|F(x)\|^2.
\end{equation}
The formal description of the global LMM-IP (G-LMM-IP) is given below. \\
\ \\
\fbox{
\begin{minipage}[h]{0.96\textwidth}
{\bf  G-LMM-IP }
\begin{description}
\item[ Step 0.] Let $x_0\in C$, an integer $M\geq0$, $\eta_1>0$, $\eta_3 > \eta_2 > 0$, $\gamma, \beta \in (0,1)$ and $\{\theta_j\}\subset[0,\infty)$ be given.
Set $k=0$ and $m_0 = 0$.
\item[ Step 1.] If $F(x_k)=0$, then {\bf stop}; otherwise, set $\mu_k := \|F(x_k)\|^2$ and compute $d_k^U \in \mathbb{R}^n$ such that
\begin{equation}\label{A1:s21}
(F'(x_k)^T F'(x_k) + \mu_k I )d_k^U= -F'(x_k)^T F(x_k).
\end{equation}
\item[ Step 2.] Define $\varepsilon_k := \theta_k^2 \|  d_k^U \|^2$ and  compute  $P_{C}(x_k + d_k^U, \varepsilon_k)$ .
Set
\begin{equation}\label{eq:lm32}
\bar d_{k} := P_{C}(x_k + d_k^U, \varepsilon_k) - x_k.
\end{equation}
If $|\langle \nabla f(x_k), \bar d_k\rangle| >  \eta_1 \|\bar d_k\|^2$ and  $ \eta_2 \|\nabla f(x_k)\|\leq \|\bar d_k\|\leq  \eta_3 \|\nabla f(x_k)\|$, 
then set $d_k = -\text{sgn}(\langle \nabla f(x_k), \bar d_k\rangle) \bar{d}_k$ 
 and go to \textbf{Step~4}.
\item[ Step 3.] Compute  $y_k\in C$ such that
\begin{equation}\label{de:576}
 \langle x_k - \nabla f(x_{k})-y_{k},x - y_{k} \rangle \leq \varepsilon_k:= \theta_k^2\|y_{k} - x_k\|^2,  \quad \forall \; x \in C,
\end{equation}
and set $d_k=y_k-x_k$.
\item[ Step 4.] Set $\alpha = 1$. Do $\alpha = \beta \alpha$, while
\begin{equation}\label{generalcondicion}
f(x_k + \alpha d_k) > \max_{0\leq j \le m_k} \{f(x_{k-j})\} + \gamma \alpha \langle \nabla f(x_k), d_k\rangle.
\end{equation}
\item[ Step. 5] Set $\alpha_k = \alpha$, update $x_{k+1} = x_k + \alpha d_k$,  $k \leftarrow k+1$ and \linebreak$m_k \leq \min\{m_{k-1}+1,M\}$, and go to \textbf{Step~1}.
\end{description}
\noindent
\end{minipage}
}

\begin{remark}
(i) Conditions on the search directions  $\bar d_k$ in  Step~2 are necessary in order to guarantee that any accumulation point of  $\{x_k\}$ is a stationary point of \eqref{pro2}.
(ii) It is easy to see that if  $y_k$ is the  orthogonal projection of $x_k - \nabla f(x_{k})$ onto $C$ (i.e., $y_k=P_{C} (x_k - \nabla f(x_{k})$),  then $y_k$ trivially satisfies \eqref{de:576}.
(iii) If $x_{k+1}=x_k$, then $d_k=0$ was necessarily  given by Step~3 of the G-LMM-IP and hence  $x_k$ is a stationary point of $\min_{x\in C} \|F(x)\|^2/2$  (i.e., $ \langle\nabla f(x_{k}),x - x_{k} \rangle \geq 0, $  for all $ x \in C$).
\end{remark}

The next theorem guarantees that the G-LMM-IP is well-defined, i.e., the Step~4 in the G-LMM-IP is satisfied in a finite number of backtrackings.  
In addition, we will also show that all limit points of the sequence generated by the G-LMM-IP are stationary points. It is worth pointing out that the proof of the next result is based on the one of \cite[Theorem~1]{grippo1986}.

\begin{theorem}
 Assume that $\Omega_0 = \{ x \in C \ : f(x) \le f(x_0) \}$ is bounded and $\theta_k \leq \bar \theta < 1$, for all $k\geq 0$. 
Then, the G-LMM-IP is well defined and any accumulation point of the sequence $\{x_k\}$ is a stationary point of $\min_{x\in C} \{f(x)\}.$
\end{theorem}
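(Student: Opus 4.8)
The plan is to follow the structure of the proof of \cite[Theorem~1]{grippo1986}, adapted to the two possible choices of direction produced in Steps~2 and~3. I would first establish well-definedness by showing that $d_k$ is always a descent direction for $f$. In the Step~3 case, taking $x=x_k$ in \eqref{de:576} and writing $d_k=y_k-x_k$ gives $\langle\nabla f(x_k),d_k\rangle\le(\theta_k^2-1)\|d_k\|^2\le-(1-\bar\theta^2)\|d_k\|^2$. In the Step~2 case the sign choice yields $\langle\nabla f(x_k),d_k\rangle=-|\langle\nabla f(x_k),\bar d_k\rangle|<-\eta_1\|\bar d_k\|^2=-\eta_1\|d_k\|^2$. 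Hence there is a constant $c:=\min\{\eta_1,\,1-\bar\theta^2\}>0$ with
\[
\langle\nabla f(x_k),d_k\rangle\le-c\,\|d_k\|^2\qquad\text{for all }k.
\]
Since $\Omega$ is open and $f$ is continuously differentiable, a standard Armijo argument, combined with the fact that the right-hand side of \eqref{generalcondicion} is at least $f(x_k)+\gamma\alpha\langle\nabla f(x_k),d_k\rangle$, shows that \eqref{generalcondicion} must fail for all sufficiently small $\alpha$; thus the backtracking in Step~4 terminates finitely and the G-LMM-IP is well defined.

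Next I would set up the nonmonotone machinery. From Step~4 one has $f(x_{k+1})\le\max_{0\le j\le m_k}f(x_{k-j})$, so inductively $f(x_k)\le f(x_0)$ and, since $x_k\in C$, the whole sequence lies in $\Omega_0$; as $\Omega_0$ is bounded and closed, $\{x_k\}$ is bounded, $\nabla f$ is bounded and uniformly continuous on $\Omega_0$, and $\|d_k\|$ is bounded (for instance, in Step~3, $\|d_k\|\le\|\nabla f(x_k)\|/(1-\bar\theta^2)$). I would define $\ell(k)\in\{k-m_k,\dots,k\}$ by $f(x_{\ell(k)})=\max_{0\le j\le m_k}f(x_{k-j})$, and use $m_{k+1}\le m_k+1$ together with the line-search inequality to check that $\{f(x_{\ell(k)})\}$ is non-increasing, hence convergent, say to $\bar f$. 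The core is then the GLL induction: for each fixed $j\ge1$ I would prove $\lim_k\alpha_{\ell(k)-j}\|d_{\ell(k)-j}\|=0$ and $\lim_k f(x_{\ell(k)-j})=\bar f$, by applying the line-search inequality at index $\ell(k)-j$, taking limits (both sides tending to $\bar f$), using the uniform descent bound to pass from $\alpha\langle\nabla f,d\rangle\to0$ to $\alpha\|d\|\to0$, and invoking uniform continuity of $f$. Writing any large index as $\ell(k)-j$ with $1\le j\le M+1$ then transfers this to the full sequence, yielding
\[
\sigma_k:=-\alpha_k\langle\nabla f(x_k),d_k\rangle\longrightarrow0,\qquad\text{hence}\qquad\alpha_k\|d_k\|^2\longrightarrow0.
\]

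Finally I would prove stationarity of an arbitrary accumulation point $x^*$, say $x_{k_i}\to x^*$. If $\alpha_{k_i}$ stays bounded away from $0$ along a subsequence, then $\alpha_{k_i}\|d_{k_i}\|^2\to0$ forces $\|d_{k_i}\|\to0$. If instead $\alpha_{k_i}\to0$, the step $\alpha_{k_i}/\beta$ was rejected, so $f(x_{k_i}+(\alpha_{k_i}/\beta)d_{k_i})>f(x_{\ell(k_i)})+\gamma(\alpha_{k_i}/\beta)\langle\nabla f(x_{k_i}),d_{k_i}\rangle\ge f(x_{k_i})+\gamma(\alpha_{k_i}/\beta)\langle\nabla f(x_{k_i}),d_{k_i}\rangle$; dividing by $\|d_{k_i}\|$, applying the mean value theorem, normalizing $\hat d_{k_i}:=d_{k_i}/\|d_{k_i}\|\to d^*$ and letting $i\to\infty$ (using $\alpha_{k_i}\|d_{k_i}\|\to0$) gives $\langle\nabla f(x^*),d^*\rangle\ge\gamma\langle\nabla f(x^*),d^*\rangle$, i.e.\ $\langle\nabla f(x^*),d^*\rangle\ge0$, which together with the descent bound $\langle\nabla f(x^*),d^*\rangle\le-c\lim_i\|d_{k_i}\|$ again forces $\|d_{k_i}\|\to0$. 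In either case $\|d_{k_i}\|\to0$; passing to a further subsequence of a single type, if Step~3 was used then $y_{k_i}\to x^*$ and the limit of \eqref{de:576} (whose right-hand side tends to $0$) gives $\langle\nabla f(x^*),x-x^*\rangle\ge0$ for every $x\in C$, while if Step~2 was used then $\eta_2\|\nabla f(x_{k_i})\|\le\|d_{k_i}\|\to0$ yields $\nabla f(x^*)=0$; both conclusions say $x^*$ is stationary for $\min_{x\in C}f(x)$.

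The main obstacle will be the GLL induction that upgrades the per-$j$ subsequential limits into $\sigma_k\to0$ for the whole sequence, which requires careful bookkeeping of the window indices $\ell(k)$; a close second is the $\alpha_{k_i}\to0$ case of the last step, where the mean value theorem and the normalization $d_{k_i}/\|d_{k_i}\|$ are indispensable because $d_{k_i}$ cannot be assumed to vanish a priori. A routine but necessary point to dispatch is the boundedness and feasibility bookkeeping that keeps $\{x_k\}$ inside the compact set $\Omega_0$.
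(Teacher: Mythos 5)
Your proposal is correct and follows essentially the same route as the paper: the uniform descent bound $\langle\nabla f(x_k),d_k\rangle\le-c\,\|d_k\|^2$ established separately for the Step~2 and Step~3 directions, the Grippo--Lampariello--Lucidi nonmonotone induction yielding $\alpha_k\|d_k\|^2\to0$, and the mean-value-theorem treatment of the $\alpha_{k_i}\to0$ case via the last rejected stepsize. The only (harmless) deviation is in the endgame: the paper additionally proves the lower bound $\|d_k\|\ge\tau_2\|P_C(x_k-\nabla f(x_k))-x_k\|$ and drives the projected-gradient residual to zero, whereas you conclude $\|d_{k_i}\|\to0$ and read stationarity directly from the defining inequalities of the two direction types (obtaining $\nabla f(x^*)=0$ in the Step~2 case and the variational inequality from \eqref{de:576} in the Step~3 case) --- both extractions are valid.
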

\begin{proof}
We will first prove that there exist positive constants $\tau_1$,  $\tau_2$ and $\tau_3$ such that the search direction $d_k$ satisfies
\begin{equation}\label{eq:1}
\langle \nabla f(x_k), d_k \rangle \leq -\tau_1\|d_k\|^2
\end{equation}
and
\begin{equation}\label{eq:3}
\tau_2 \|P_C(x_k-\nabla f(x_k))-x_k\|\leq \|d_k\|\leq \tau_3 \|\nabla f(x_k)\|,
\end{equation}
for every $k\geq0$. If $d_k$ is given by Step~2 of the G-LMM-IP, we trivially have that \eqref{eq:1} and  the second inequality in
\eqref{eq:3} hold with $\tau_1=\eta_1$ and  $\tau_3=\eta_3$.  On the other hand,  from  Step~2 of the G-LMM-IP, \eqref{Pro:exp} and the  fact that $x_k\in C$, we obtain
\[
 \|d_k\|\geq  \eta_2 \|\nabla f(x_k)\|\geq  \eta_2\|P_C(x_k-\nabla f(x_k))-x_k\|,
\]
which implies that the first inequality in \eqref{eq:3} holds with $\tau_2=\eta_2$.

Let us now prove that if $d_k$ is given by the Step~3 of the G-LMM-IP, then inequalities  \eqref{eq:1} and \eqref{eq:3} are also satisfied.  From \eqref{de:576} with $x = x_k$ and the fact that $d_k = y_k - x_k$, we have
$$\langle \nabla f(x_{k}), d_k \rangle \leq \theta_k^2\|d_k\|^2 - \|d_k\|^2 = (\theta_k^2 - 1) \|d_k\|^2,$$
which, combined with the fact that $\theta_k \leq \bar \theta$ for all $k\geq 0$, yields
$$\langle \nabla f(x_{k}), d_k \rangle \leq  -(1 - \bar \theta^2) \|d_k\|^2. $$
Hence, inequality \eqref{eq:1} holds with $\tau_1 = (1 - \bar \theta^2)$.
By Step~3 of the G-LMM-IP, Proposition~\ref{prop:errproj} and the  fact that $x_k\in C$, we have
\begin{align*}
\|d_k\| &= \|P_{C}(x_k - \nabla f(x_k), \varepsilon_k) - P_{C}(x_k) \|\\
& \leq \|x_k - \nabla f(x_k) - x_k\| + \sqrt{\varepsilon_k}\\
& = \|\nabla f(x_k)\| + \theta_k \|d_k\|,
\end{align*}
where the last equality follows from $\varepsilon_k = \theta_k^2 \|d_k\|^2$.
Hence, as $ \theta_k \leq \bar \theta<1 $ for all $k\geq 0$, we conclude that
$$ \|d_k\| \leq \|\nabla f(x_k)\|/(1 - \bar \theta), $$
which implies that the second inequality in \eqref{eq:3} holds with $\tau_3 = 1/(1 - \bar \theta)$.
On the other hand,  from Step~3 of the G-LMM-IP, we obtain
\begin{equation}\label{eq:4}
\|d_k\| =  \|P_C(x_k-\nabla f(x_k), \varepsilon_k)- x_k\|.
\end{equation}
Note that,
\begin{equation}\label{eq:5}
\|P_C(x_k - \nabla f(x_k))-x_k\|\leq \sqrt{\varepsilon_k} + \|P_C(x_k-\nabla f(x_k),\varepsilon_k)-x_k\|.
\end{equation}
Indeed, by the triangle inequality, we find
\begin{align*}
\|P_C(x_k - \nabla f(x_k))-x_k\|&\leq \|P_C(x_k - \nabla f(x_k))- P_C(x_k-\nabla f(x_k),\varepsilon_k)\| + \ \\
& \quad \ \|P_C(x_k-\nabla f(x_k),\varepsilon_k) - x_k\|\\
&\leq \sqrt{\varepsilon_k}  + \|P_C(x_k-\nabla f(x_k),\varepsilon_k) - x_k\|,
\end{align*}
where last inequality is due to Proposition~\ref{prop:errproj}.
Thus, combining \eqref{eq:4} and \eqref{eq:5}, we have
\begin{align}\label{eq:6}
\|d_k\| &\geq  \|P_C(x_k - \nabla f(x_k))- x_k\| - \sqrt{\varepsilon_k}\nonumber\\
& \geq \|P_C(x_k - \nabla f(x_k))- x_k\| - \bar \theta \|d_k\|,
\end{align}
where in the last inequality we also used  the facts that $\varepsilon_k = \theta_k^2 \|d_k\|^2$ and $ \theta_k \leq \bar \theta<1 $ for all $k\geq 0$. Therefore, from  \eqref{eq:6}, we obtain
\[
(1 +  \bar \theta) \|d_k\| \geq  \|P_C(x_k - \nabla f(x_k))- x_k\|,
\]
which implies that the first inequality in \eqref{eq:3} holds with $\tau_2 = 1/(1 +  \bar \theta)$.


Let us now show that any accumulation point of the $\{x_k\}$ is a stationary point of $\min_{x\in C} \{ f(x)=  \|F(x)\|^2/2\}$ 
by adapting the proof presented in \cite[Theorem~1]{grippo1986}.

Let $l(k)$ be an integer such that $k - m_k \leq l(k)\leq k$ and
\[
f(x_{l(k)}) = \max_{0 \leq j \leq m_k} f(x_{k-j}).
\]
Since $m_{k+1} \le m_k + 1$, it follows that $\{f(x_{l(k)})\}$ is monotonically nonincreasing, 
and from the boundness of $\Omega_0$, we ensure that $\{f(x_{l(k)}) \}$ has a limit. 
Then, from \eqref{generalcondicion}, for $k>M$, we have that 
\begin{align}\label{eq:teoglo}
f(x_{l(k)}) & = f(x_{l(k) - 1} + \alpha_{l(k) - 1} d_{l(k) - 1})\nonumber\\
&\leq \max_{0\leq j \leq m_{l(k)-1}} \{f(x_{l(k) - 1 - j})\} + \gamma \alpha_{l(k)-1}\langle \nabla f(x_{(l(k)-1)}),d_{(l(k)-1)}\rangle\nonumber\\
&= f(x_{l(l(k)-1)}) +  \gamma \alpha _{(l(k)-1)}\langle \nabla f(x_{(l(k)-1)}),d_{(l(k)-1)}\rangle.
\end{align}
Now, because $ \alpha _{(l(k)-1)} > 0$ and $\langle \nabla f(x_{(l(k)-1)}),d_{(l(k)-1)}\rangle < 0$, 
by taking limits in \eqref{eq:teoglo}, it follows that $\displaystyle lim_{k \rightarrow \infty} \alpha _{(l(k)-1)}\langle \nabla f(x_{(l(k)-1)}),d_{(l(k)-1)}\rangle = 0$. 
Moreover, from \eqref{eq:1} and \eqref{eq:3}, we conclude that 
$$
\displaystyle lim_{k \rightarrow \infty} \alpha _{(l(k)-1)} \| P_{C}(x _{(l(k)-1)} - \nabla f(x _{(l(k)-1)}) ) - x_{(l(k)-1)} \|^2 = 0, 
$$
and following the reasoning in the proof of \cite[Theorem~1(a)]{grippo1986}, we can write
\begin{equation}\label{eq:key}
\displaystyle lim_{k \rightarrow \infty} \alpha _{k} \| P_{C}(x _{k} - \nabla f(x _{k)}) ) - x_{k} \|^2 = 0.
\end{equation}

Now, let $\tilde x \in C$ be an accumulation point of $\{x_k\}$, and relabel $\{x_k\}$ a subsequence converging to $\tilde x$. 
By \eqref{eq:key}, either $\| P_{C}(x _{k} - \nabla f(x _{k)}) ) - x_{k} \| \rightarrow 0$, which implies by continuity that 
$\| P_{C}(\tilde x - \nabla f(\tilde x) ) - \tilde x \| = 0$, or 
%
%
%
%
%
%
%
%
there exists a subsequence $\{x_k\}_K$ such that $\displaystyle\lim_{k \in K}\alpha _k = 0$. 
In this last case, let $\alpha _k$ be chosen in the Step~4 of the G-LMM-IP such that $\alpha _k = \bar{\alpha } _k/2$, where $\bar\alpha _k$ was the last step that fail in \eqref{generalcondicion}, i.e.,
\begin{align}\label{eq:ar}
f(x_k + \bar{\alpha }_k d_k) >  \max_{0 \leq j \leq m_k} \{f(x_{k-j})\} +  \gamma \bar{\alpha }_k \langle \nabla f(x_k), d_k \rangle \geq f(x_k) +  \gamma \bar{\alpha }_k\langle \nabla f(x_k), d_k\rangle.
\end{align}
By the mean value theorem, there exists  $\zeta_k \in [0,1]$ such that \eqref{eq:ar} can be written as
\begin{align}\label{eq:arm}
\langle \nabla f(x_k + \zeta_k s_k), s_k \rangle = f(x_k + s_k) - f(x_k) > \gamma \langle \nabla f(x_k) , s_k \rangle,
\end{align}
where  $s_k := \bar{\alpha }_k d_k$. 
 Notice that $s_k$ goes to zero as $k \in K$ goes to infinity, because $\lim_{k \in K}\alpha _k = 0$ and $\|d_k\|$ is bounded. 
 So, from \eqref{eq:arm}, we have
\begin{align}\label{eq:armi}
\left \langle \nabla f(x_k + \zeta_k s_k), \frac{s_k}{\|s_k\|} \right\rangle  > \gamma \left \langle \nabla f(x_k) , \frac{s_k}{\|s_k\|} \right\rangle.
\end{align}
By taking limit in \eqref{eq:armi} as $k\in K_1$ goes to infinity, where $K_1$ is such that
$$\lim_{k\in K_1} \frac{s_k}{\|s_k\|} = s,$$ we obtain  $(1 - \gamma)\langle \nabla f(\tilde x) , s \rangle \geq 0$. Since $(1 - \gamma ) > 0$, we have
\begin{align}\label{eq:te}
 \langle  \nabla f(\tilde x), s \rangle \geq 0.
\end{align}
On the other hand, it  follows  from \eqref{eq:1} that  $\langle  \nabla f(x_k), d_k \rangle < 0$ for all $k\geq 0$, which combined with the fact that  $s_k = \bar{\alpha }_k d_k$, yields
\[
\left \langle \nabla f(x_k), \frac{s_k}{\|s_k\|} \right \rangle < 0, \quad  \forall k\geq 0.
\]
Hence, by taking limit in the last inequality, we conclude that $\langle  \nabla f(\tilde x), s \rangle \leq  0$, which combined with \eqref{eq:te}, yields $\langle  \nabla f(\tilde x), s \rangle = 0$. Using  the  definition of $s_k$, \eqref{eq:1} and \eqref{eq:3}, we have
\[
\left \langle \nabla f(x_k), \frac{s_k }{\|s_k\|} \right\rangle=\left \langle \nabla f(x_k), \frac{d_k }{\|d_k\|} \right\rangle \leq - \tau_1 \|d_k\| \leq - \tau_1 \tau_2\|P_{C}(x_k - \nabla f(x_k)) - x_k\|.
\]
Therefore, by  taking   limit in the last inequality as  $k\in K_1$ goes to infinity, we have
$$0 = \langle \nabla f(\tilde x) , s \rangle \leq - \tau_1 \tau_2\|P_{C}(\tilde x - \nabla f(\tilde x)) - \tilde x\|.$$
So, $\|P_{C}(\tilde x - \nabla f(\tilde x)) - \tilde x\| = 0$, which proves $\tilde x$ is a stationary point of \linebreak $\min_{x\in C} \{ f(x)=  \|F(x)\|^2/2\}.$
\end{proof}

\section{Numerical experiments}\label{numerical}

The purpose of these numerical experiments is to assess the practical behavior of G-LMM-IP.
For that, we consider two classes of nonlinear systems constrained to certain compact sets.
Firstly, we worked with box-constrained underdetermined systems and compared the performance
of G-LMM-IP with a well-known solver for bound-constrained least-squares problems.
Then, in the second set of test problems, we consider solving a system of equations over the spectrahedron,
where the use of inexact projections are essential to handle large-scale problems.

\subsection{Box-constrained systems}

This section reports some preliminary numerical experiments obtained by applying the G-LMM-IP
to solve 16 test problems of the form \eqref{eq:prob} with \linebreak $C =\{x \in \mathbb{R}^n: l \leq x \leq u\}$,
where $l, u \in\mathbb{R}^{n}$, see Table~\ref{table12}.
Most of them are small scale box-constrained underdetermined (or square) systems of nonlinear equations.
The last three, in fact, are defined by the set of nonlinear and bound constraints of optimization problems
from the CUTEr collection \cite{cuter}.

\begin{table}[h]
\centering
\caption{Test problems}\label{table12}
\vspace{0.2cm}
\begin{tabular}{llrr}
\hline\noalign{\smallskip}
Problem & Name and source & $m$ & $n$   \\
\noalign{\smallskip}\hline\noalign{\smallskip}
Pb 1  & Problem 46  from \cite{hock1981} & 2   & 5  \\
Pb 2  & Problem 53  from \cite{hock1981} & 3   & 5  \\
Pb 3  & Problem 56 from  \cite{hock1981} & 4   & 7 \\
Pb 4  & Problem 63  from \cite{hock1981} & 2   & 3 \\
Pb 5  & Problem 75  from \cite{hock1981} & 3   & 4 \\
Pb 6  & Problem 77  from \cite{hock1981} & 2   & 5  \\
Pb 7  & Problem 79  from \cite{hock1981} & 3   & 5 \\
Pb 8  & Problem 81  from \cite{hock1981} & 3   & 5 \\
Pb 9  & Problem 87  from \cite{hock1981} & 4   & 6  \\
Pb 10 & Problem 107 from \cite{hock1981} & 6   & 9  \\
Pb 11 & Problem 111 from \cite{hock1981} & 3   & 10 \\
Pb 12 & Problem 2 from \cite{Kanzow2004} & 150 & 300 \\
Pb 13 & Problem 4 from \cite{Kanzow2004} & 150 & 300 \\
Pb 14 & Problem EIGMAXA from \cite{cuter} & 101 & 101 \\
Pb 15 & Problem EIGMAXB from \cite{cuter} & 101 & 101 \\
Pb 16 & Problem EIGENA from \cite{cuter} & 2550 & 2550 \\
\hline
\end{tabular}
\end{table}

\begin{table}[h]
\centering
\caption{Performance of the G-LMM-IP and TRESNEI}\label{tab12}       
\vspace{0.2cm}
{\small
\begin{tabular}{cccccccccc}
\hline
 &  \multicolumn{3}{c}{G-LMM-IP ($M=1$)} &   \multicolumn{3}{c}{G-LMM-IP ($M=15$)} &    \multicolumn{3}{c}{TRESNEI}\\
Problem  & It & Fe & Time   & It & Fe & Time   & It & Fe & Time \\
\hline
Pb  1 & 8 & 9 & 0.19 & 8   & 9  & 0.20  & 8   & 9  & 0.18\\
Pb  2 & 1  &  2  & 0.03   & 1  & 2  & 0.03  & 2  & 3  & 0.05 \\
Pb  3  & 3  &  4  & 0.03 & 3   & 4  & 0.05 & 3   & 4  & 0.08  \\
Pb  4 & 5  &  8  & 0.05  & 5  & 8  & 0.06 & 5  & 6  & 0.07 \\
Pb  5 & 9  &  18  & 0.11  & 16   & 27  & 0.17  & 48   & 61  & 0.61 \\
Pb  6  & 6  &  7  & 0.05  & 6  & 7  & 0.06  & 6  & 7  & 0.08\\
Pb  7 & 4  &  5  & 0.05  & 4  & 5  & 0.05  & 4  & 5  & 0.06\\
Pb  8 & 8  &  9  & 0.09  & 8  & 9  & 0.10  & 109  & 114  & 1.11\\
Pb  9 & 48  &  49  & 0.60  & 48  & 49  & 0.62 & 54  & 55  & 0.67 \\
Pb  10 & 8  &  11  & 0.18   & 8  & 11  & 0.18  & 6  & 7  & 0.22\\
Pb  11 & 33  &  34  & 0.35   & 33  & 34  & 0.37  & 17  & 18  & 0.22 \\
Pb  12   &  2  &  3  & 0.05 & 2  & 3  & 0.05  & 18  & 19  & 0.64\\
Pb  13 & 9  &  10  & 0.20  & 9  & 10  & 0.20 & 16  & 17  & 0.42  \\
Pb 14 & 2 & 3 & 0.03 & 2 & 3 & 0.03 & 2 & 3 & 0.08 \\ 
Pb 15 & 12 & 13 & 0.13 & 10 & 11 & 0.08 & 20 & 29 & 0.42 \\ 
Pb 16 & 3 & 4 & 6.63 & 3 & 4 & 6.63 & 35 & 48 & 132.62 \\
\hline
\end{tabular}
}
\end{table}

We compare the performance of the G-LMM-IP with a Trust-Region Solver for Nonlinear Equalities and Inequalities (TRESNEI),
which is a MATLAB package based on the trust-region method \cite{porcelli12}, and available on  the web site  {\it http://tresnei.de.unifi.it/}.
The parameters of the TRESNEI were selected as recommended by the authors, see \cite[Subsection~6.2]{porcelli12}.
All numerical results were obtained using MATLAB R2018b on a 1.8GHz Intel\textregistered\ Core\texttrademark\ i5 with 8GB of RAM with MacOS 10.13.6 operating system.
The starting points and the bound constraints were defined as in \cite{hock1981,Kanzow2004}, 
except for the last three problems whose bounds and starting point are provided by the CUTEr package \cite{cuter}.
Moreover, we used the same overall termination condition  $\|F(x_k)\|\leq10^{-6}$. 
In the \linebreak G-LMM-IP, the initialization data were $M=1$, $M = 15$, $\eta_1 = 10^{-4}$, $\eta_2 = 10^{-2}$, $\eta_3 = 10^{10}$, $\gamma = 10^{-3}$, 
$\beta = 1/2$ and $\theta_k = 0$ for all $k$, (i.e., we consider exact orthogonal projection which is given explicitly by $P_{C}(x) = \min\{u , \max\{x,l\}\}$ in this application).
The linear systems in \eqref{A1:s21} were solved via QR factorization of the augmented matrix $( F'(x_k)^T  \ \sqrt{\mu_k}I )^T$.

Table~\ref{tab12} display the numerical results obtained for this test set. 
The methods were compared on the total number of iterations (It), number of $F$-evaluation (Fe) and CPU time in seconds (Time). 

From Table~\ref{tab12}, 
we can see that both G-LMM-IP and TRESNEI were able to solve all problems.
Regarding to the number of iterations, we observe that both versions of G-LMM-IP ($M=1$ and $M=15$)  are comparable to or even better than TRESNEI,
since they required less iterations in 8 out of 16 instances.
Similar behavior can also be observed for the number of F-evaluations.

Surprisingly, the monotone and the non-monotone versions of G-LMM-IP behaved quite similarly for this test set
(differences only occurred on problems 5 and 15). This happened because the full-step ($\alpha_k=1$) for the projected LM direction
was accepted in almost every iteration. Additionally, we remark that in almost all iterations the search direction 
was provided by Step~2 of G-LMM-IP, i.e., $d_k$ was the projected LM direction. Only for Problem 11 the projected gradient direction 
had to be used in 15 out of 33 iterations.


In summary, we can say that the G-LMM-IP seems to be reliable and competitive for solving small to medium-scale box-constrained systems of nonlinear equations.

\subsection{System of equations over the spectrahedron}

Let $F: \Sn \rightarrow \Rm$ be a continuous differentiable map from the set of symmetric matrices $\Sn$ to $\Rm$.
In this section, we consider the problem of finding a zero of $F$ belonging to the set of symmetric positive semidefinite
matrices with unit trace, denoted by
$$
\Snm = \{ X \in \Sn \ : \ \trace{X} = 1, \ X \succeq 0 \}.
$$
The set $\Snm$ is also known as spectrahedron.

Let $Y \in \Sn$ with spectral decomposition $Y = Q \Lambda Q^T$, where $Q$ is orthogonal and \linebreak
$\Lambda = \text{diag}(\lambda_1, \dots, \lambda_n)$
is a diagonal matrix with the eigenvalues of $Y$. It is a well-known fact that the projection
of $Y$ onto $\Snm$, with respect to the Frobenius norm (trace inner product), is given by
$$
Y_{+} = Q P_{\Delta_n} (\Lambda) Q^T,
$$
where $P_{\Delta_n} (\Lambda)$ corresponds to the projection of the eigenvalues of $Y$ onto the unit simplex in $\Rn$.
See \cite{ggrl2016}, and references therein, for further details.

Thus, the main burden for methods based on exact projections is the cost $O(n^3)$ of a full spectral decomposition
which turns prohibitive for general matrices of moderate size.

In this scenario, our concept of inexact projection becomes  interesting in practice.
As already mentioned in Remark~\ref{remark1d}, one way for computing an inexact projection of $Y_k$
onto $\Snm$ is by using the Conditional Gradient (Frank-Wolfe) method.

The $t$-iteration of the standard Frank-Wolfe method for solving
\begin{equation}\label{eq:projspec}
\begin{aligned}
\min & \quad (1/2) \| Z - Y_k \|_F^2 \\
\text{s.t} & \quad Z \in \Snm
\end{aligned}
\end{equation}
needs to find a $\bar{Z_t} \in \Sn$ solution of
\begin{equation}\label{eq:fwsub}
\begin{aligned}
\max_{Z} & \quad - \langle Z_t - Y_k, \, Z - Z_t \rangle \\
\text{s.t} & \quad \langle I, Z \rangle = 1, Z \succeq 0,
\end{aligned}
\end{equation}
where $Z_t$ is the current iterate.
%
The dual of (\ref{eq:fwsub}) is given by
\begin{equation}\label{eq:dual}
\begin{aligned}
\min_{\lambda} & \quad \lambda \\
\text{s.t} & \quad \lambda I - A \succeq 0,
\end{aligned}
\end{equation}
where $A = Y_k - Z_t$. Problem \eqref{eq:dual} is solved by determining the largest eigenvalue of $A$. Let $(\lambda, v)$ be such eigenpair (with $\|v\|=1$).
Then, the solution of (\ref{eq:fwsub})  is given by $\bar{Z}_t = vv^T$.

However, it is well-known that the classical version of Frank-Wolfe presents only $O(1/k)$ convergence rate \cite{jaggi13},
which means slow convergence, particularly when the tolerance $\varepsilon_k$ of the desired inexact projection
is relatively small.

For this reason, we consider
the method proposed in \cite{rankkfw}, which is a rank-$p$ variant of Frank-Wolfe that henceforth will be called FWp, for short.
Such method achieves linear convergence provided the solution of \eqref{eq:projspec} has rank $p$.
In fact, when specialized to problem \eqref{eq:projspec}, each iteration can be seen as
an ``inexact'' projected gradient iteration, where the next iterate $Z_{t+1}$ corresponds to
the matrix belonging to $\Snm$, with rank not greater than $p$, that is closest to $Z_t - \nabla \phi (Z_t)$,
where  $\phi(Z) = (1/2) \| Z - Y_k \|_F^2$ (i.e., the solution of \eqref{eq:projspec} with the additional constraint that $\text{rank}(Z) \le p$).
This subproblem demands the computation of the $p$ largest eigenvalues/vectors of $Z_t - \nabla \phi (Z_t)$.

Let $Y_k^+$ denotes the solution of \eqref{eq:projspec}. In case $\text{rank}(Y_k^+) \le p$, we retrieve the exact projection
in a single iteration of FWp. However, the overestimation of $\text{rank}(Y_k^+)$ may yield subproblems that are as expensive as
the full eigendecomposition of $Y_k$. In order to address this issue, we assume that an educate lower bound for $\text{rank}(Y_k^+)$
is available. Then, after each iteration of FWp, we check condition \eqref{eq:iproj} by solving \eqref{eq:fwsub} and if it is not verified,
we double the value of $p$. The last value of $p$ used in FWp for computing the approximate projection of $Y_k$ is stored to be
the initial guess for the rank of $Y_{k+1}^{+}$. For the computation of the $p$ largest eigenpairs we used the command {\tt eigs} from MATLAB.

The test problems where generated in the following way. We considered $F(X) = {\cal A}(X) - b$,
where ${\cal A}(X) = ( \langle A_1, X \rangle, \dots, \langle A_m, X \rangle )^T$.
We generate $X_* \in \Snm$ using the spectral decomposition $X_* = Q_* \Lambda_* Q_*^T$ with a random orthogonal matrix $Q$
and set $q$ eigenvalues in $\Lambda_*$ to $1/q$ and the remaining to zero.
Then, we build the matrices $A_{\ell} = (e_ie_j^T + e_je_i^T)/2$, for $\ell = 1,2, \dots, m$, where $e_i$ denotes a canonical vector of $\Rn$, and
the pairs $(i,j)$ correspond to the $m$ largest entries of $X_*$. Finally, $b_{\ell} = \langle A_{\ell}, X \rangle$, for $\ell = 1,2, \dots, m$.

Three different starting points were used: $X_0 = (1 - a) I/n + a \hat{X}$, where $\hat{X} = e_1 e_1^T$ and $a \in \{0, 1/2, 1\}$.
Thus, we vary from the geometric center of $\Snm$ to an extreme point of the feasible set.
The stopping criterion was set to $\| F(X_k) \| < 10^{-2}$ and the parameters of G-LMM-IP set to
$M = 1$, $\eta_1 = 10^{-2}$, $\eta_2 = 10^{-3}$, $\eta_3 = 10^{6}$, $\gamma = 10^{-3}$, $\beta = 1/2$, $\theta_k = 0.9$ for all $k$
in the inexact version and $\theta_k = 0$ in the exact one. In all instances, we have used the initial guess $p=1$ as for these instances
we known the rank of a solution $X_*$ was set to $q=4$.

In Table~\ref{tab:spec}, we present the number of iterations and CPU time in seconds for the class of problems discussed above,
varying the dimension $n$ and the number of equations $m$. 
We stress that the full projected LM step was accepted always in the line search.
As we can follow, although the number of iterations
may increase for some instances, 
the use of inexact projections provides considerable savings in terms of CPU time. 
In almost all  cases, the inexact version takes at least 50\% less CPU time  than the exact one, reaching, in some instances, 80\%.

These experiments indicate that G-LMM-IP-FWp as a promising alternative for solving system of equations over the spectrahedron
when it is expected that some solution in $C^*$ has low-rank.

Finally, in order to illustrate the superlinear versus linear local convergence rate, we solved again the instance with $n=1000$ and $m=200$, 
starting from $X_0 = I/n$, but using the refined stopping criterion $\| F(x_k) \| < 10^{-7}$. Table~\ref{tab:converg} shows the value of $\| F(x_k) \|$ in each iteration, 
which is a measure of $\dist(x_k, C^*)$ in view of the error bound condition {\bf (A2)}.

\begin{table}
\centering
\caption{Comparison of G-LMM with exact and inexact projections
for solving a system of equations over the spectrahedron.}\label{tab:spec}
\begin{tabular}{rrrcccc}
\hline
\ & \ &\  & \multicolumn{2}{c}{Exact} & \multicolumn{2}{c}{Inexact} \\ 
$n$ & $m$ & $\gamma$ & It & Time & It & Time \\ \hline
1000 & 200 & 0 & 2 & 0.93 & 4 & 0.74 \\
\ & & 0.5 & 15 & 4.75 & 15 & 1.85  \\
\ & & 1.0 & 19 & 5.23 & 19 & 2.24 \\ \hline
2000 & 400 & 0 &  2 & 5.11 & 4 & 3.28 \\
\ & & 0.5 & 15 & 29.49 & 15 & 7.50 \\
\ & & 1.0 & 19 & 36.53 & 19 & 9.18 \\ \hline
3000 & 600 & 0 & 2 & 15.33 & 4 & 8.20 \\
\ & & 0.5 & 15 & 91.70 & 15 & 16.93 \\
\ & & 1.0 & 19 & 115.69 & 19 & 20.34 \\ \hline
4000 & 800 & 0 & 2 & 34.82 & 4 & 15.55 \\
\ & & 0.5 & 15 & 206 & 15 & 34.64 \\
\ & & 1.0 & 19 & 258 & 19 & 37.24 \\ \hline
5000 & 1000 & 0 & 2 & 65 & 4 & 28.70 \\
\ & & 0.5 & 15 & 391 & 15 & 57.97 \\
\ & & 1.0 & 19 & 502 & 19 & 68.63 \\ \hline
\end{tabular}
\end{table}

\begin{table}
\centering
\caption{Superlinear ($\theta_k = 0$) versus linear ($\theta_k = 0.9$) convergence of LMM-IP.}\label{tab:converg}
\begin{tabular}{ccc}
\\ \hline 
\ & \multicolumn{2}{c}{$\| F(x_k) \|$} \\
$k$ & Exact & Inexact \\ \hline 
1 & 1.00E--01 & 6.17E--02 \\
2 & 1.40E--03 & 3.06E--02  \\ 
3 & 5.60E--06 & 1.50E--02 \\
4 & 2.24E--08 & 7.26E--03  \\
5 & & 3.37E--03  \\
6 & & 1.43E--03  \\
7 & & 4.56E--04  \\
8 & & 1.82E--06  \\
9 & & 7.34E--09  \\ \hline 
\end{tabular}
\end{table}

\section{Final remarks}\label{remarks}

This paper proposed and analyzed a Levenberg-Marquardt method  with  inexact projections for solving constrained  nonlinear systems.
For the local method, which combines the unconstrained Levenberg-Marquardt method with a type of the feasible  inexact projection, 
the local convergence as well as results on its rate were established under an error bound condition, which  is weaker than the standard full-rank condition of the $F'$. 
Then, a global version of this method has been proposed. 
It basically  consists of combining  our first algorithm, 
safeguarded by inexact projected gradient  steps,  with  a nonmonotone line search technique. 
The global convergence analysis was also presented.
Some numerical experiments were carried out in order to illustrate the numerical behavior of the methods. 
They indicate that the proposed schemes represent an useful tool for solving constrained  nonlinear systems mainly 
when  the orthogonal projection onto the feasible set can not be easily computed.


\section*{Acknowledgments}
\noindent The work of DSG was supported in part by CNPq Grant 421386/2016-9.
The work of MLNG and FRO was supported in part by CAPES, CNPq Grants 302666/2017-6, 408123/2018-4 
and FAPEG/CNPq/PRONEM-201710267000532.

\bibliographystyle{elsarticle-num}
\bibliography{RefInexactProjectLMN}

\begin{thebibliography}{10}
\expandafter\ifx\csname url\endcsname\relax
  \def\url#1{\texttt{#1}}\fi
\expandafter\ifx\csname urlprefix\endcsname\relax\def\urlprefix{URL }\fi
\expandafter\ifx\csname href\endcsname\relax
  \def\href#1#2{#2} \def\path#1{#1}\fi

\bibitem{Behling2017}
R.~Behling, A.~Fischer, G.~Haeser, A.~Ramos, K.~Sch{\"o}nefeld, On the
  constrained error bound condition and the projected {L}evenberg-{M}arquardt
  method, Optimization 66~(8) (2017) 1397--1411 (2017).

\bibitem{Behling2014}
R.~Behling, A.~Fischer, M.~Herrich, A.~Iusem, Y.~Ye, A {L}evenberg-{M}arquardt
  method with approximate projections, Comput. Optim. Appl. 59~(1) (2014) 5--26
  (2014).

\bibitem{morini1}
S.~Bellavia, M.~Macconi, B.~Morini, An affine scaling trust-region approach to
  bound-constrained nonlinear systems, Appl. Num. Math. 44~(3) (2003) 257--280
  (2003).

\bibitem{bellavia2006}
S.~Bellavia, B.~Morini, Subspace trust-region methods for large
  bound-constrained nonlinear equations, SIAM J. Numer. Anal. 44~(4) (2006)
  1535--1555 (2006).

\bibitem{echebest2012}
N.~Echebest, M.~L. Schuverdt, R.~P. Vignau, A derivative-free method for
  solving box-constrained underdetermined nonlinear systems of equations, Appl.
  Math. Comput. 219~(6) (2012) 3198--3208 (2012).

\bibitem{Fan2013}
J.~Fan, On the {L}evenberg-{M}arquardt methods for convex constrained nonlinear
  equations, J. Ind. Manag. Optim. 9~(1) (2013) 227--241 (2013).

\bibitem{CondG}
M.~L.~N. Gon\c{c}alves, J.~G. Melo, A {N}ewton conditional gradient method for
  constrained nonlinear systems, J. Comput. Appl. Math. 311 (2017) 473--483
  (2017).

\bibitem{Oliveira2017}
M.~L.~N. {Gon\c{c}alves}, F.~R. {Oliveira}, An inexact {N}ewton-like
  conditional gradient method for constrained nonlinear systems, Appl. Num.
  Math. 132 (2018) 22--34 (2018).

\bibitem{Kanzow2004}
C.~Kanzow, N.~Yamashita, M.~Fukushima, Levenberg-{M}arquardt methods with
  strong local convergence properties for solving nonlinear equations with
  convex constraints, J. Comput. Appl. Math. 172~(2) (2004) 375--397 (2004).

\bibitem{sandra}
D.~N. Kozakevich, J.~M. Martinez, S.~A. Santos, Solving nonlinear systems of
  equations with simple constraints, Comput. Appl. Math. 16 (1997) 215--235
  (1997).

\bibitem{cruz2014}
W.~La~Cruz, A projected derivative-free algorithm for nonlinear equations with
  convex constraints, Optim. Methods Softw. 29~(1) (2014) 24--41 (2014).

\bibitem{MACCONI2009859}
M.~Macconi, B.~Morini, M.~Porcelli, Trust-region quadratic methods for
  nonlinear systems of mixed equalities and inequalities, Appl. Num. Math.
  59~(5) (2009) 859--876 (2009).

\bibitem{mariniquasi}
L.~Marini, B.~Morini, M.~Porcelli, Quasi-{N}ewton methods for constrained
  nonlinear systems: complexity analysis and applications, Comput. Optim. Appl.
  71 (2018) 147--170 (2018).

\bibitem{martinez1997}
M.~J. Martinez, Quasi-inexact-{N}ewton methods with global convergent for
  solving constrained nonlinear systems, Nonlinear Anal. 30~(1) (1997) 1--7
  (1997).

\bibitem{morini2016}
B.~Morini, M.~Porcelli, P.~L. Toint, Approximate norm descent methods for
  constrained nonlinear systems, Math. Comput. 87~(311) (2018) 1327--1351
  (2018).

\bibitem{Porcelli}
M.~Porcelli, On the convergence of an inexact {G}auss-{N}ewton trust-region
  method for nonlinear least-squares problems with simple bounds, Optim.
  Letters 7~(3) (2013) 447--465 (2013).

\bibitem{wang2016}
P.~Wang, D.~Zhu, An inexact derivative-free levenberg--marquardt method for
  linear inequality constrained nonlinear systems under local error bound
  conditions, Appl. Math. Comput. 282 (2016) 32--52 (2016).

\bibitem{Zhang1}
Y.~Zhang, D.-t. Zhu, Inexact {N}ewton method via {L}anczos decomposed technique
  for solving box-constrained nonlinear systems, Appl. Math. Mech. 31~(12)
  (2010) 1593--1602 (2010).

\bibitem{Zhu2005343}
D.~Zhu, An affine scaling trust-region algorithm with interior backtracking
  technique for solving bound-constrained nonlinear systems, J. Comput. App.
  Math. 184~(2) (2005) 343--361 (2005).

\bibitem{Levenberg1944}
K.~Levenberg, A method for the solution of certain nonlinear problem in least
  squares, Quart. Appl. Math.~(2) (1944) 164--166 (1944).

\bibitem{Marquardt1963}
D.~W. Marquardt, An algorithm for least-squares estimation of nonlinear
  inequalities, SIAM J. Appl. Math.~(11) (1963) 431--441 (1963).

\bibitem{Zhang2003}
J.-L. Zhang, On the convergence properties of the {L}evenberg-{M}arquardt
  method, Optimization 52~(6) (2003) 739--756 (2003).

\bibitem{Fan2005}
J.-y. Fan, Y.-x. Yuan, On the quadratic convergence of the levenberg-marquardt
  method without nonsingularity assumption, Computing 74~(1) (2005) 23--39
  (2005).

\bibitem{Yamashita2000}
N.~Yamashita, M.~Fukushima, On the rate of convergence of the
  {L}evenberg-{M}arquardt method, in: Topics in Numerical Analysis, Springer
  Vienna, 2001, pp. 239--249 (2001).

\bibitem{jaggi13}
M.~Jaggi, Revisiting {F}rank-{W}olfe: Projection-free sparse convex
  optimization, in: Proceedings of the 30th International Conference on Machine
  Learning (ICML-13), Vol.~28, 2013, pp. 427--435 (2013).

\bibitem{grippo1986}
L.~Grippo, F.~Lampariello, S.~Lucidi, A nonmonotone line search technique for
  {N}ewton's method, SIAM J. Numer. Anal. 23~(4) (1986) 707--716 (1986).

\bibitem{fw1956}
M.~Frank, P.~Wolfe, An algorithm for quadratic programming, Naval Res. Logist.
  3~(1-2) (1956) 95--110 (1956).

\bibitem{cuter}
N.~I.~M. Gould, D.~Orban, P.~L. Toint, Cuter, a constrained and unconstrained
  testing environment, revisited, ACM Trans. Math. Softw. 29 (2003) 373--394
  (2003).

\bibitem{hock1981}
W.~Hock, K.~Schittkowski, Test examples for nonlinear programming codes,
  Journal of Optimization Theory and Applications 30~(1) (1980) 127--129
  (1980).

\bibitem{porcelli12}
B.~Morini, M.~Porcelli, {TRESNEI}, a {M}atlab trust-region solver for systems
  of nonlinear equalities and inequalities, Comput. Optim. Appl. 51 (2012)
  27--49 (2012).

\bibitem{ggrl2016}
D.~S. Gon{\c c}alves, M.~A. Gomes-Ruggiero, C.~Lavor, A projected gradient
  method for optimization over density matrices, Optim. Methods Softw. 31
  (2016) 328--341 (2016).

\bibitem{rankkfw}
Z.~Allen-Zhu, E.~Hazan, W.~Hu, Y.~Li, {L}inear {C}onvergence of a
  {F}rank-{W}olfe type algorithm over trace-norm balls, in: I.~Guyon, U.~V.
  Luxburg, S.~Bengio, H.~Wallach, R.~Fergus, S.~Vishwanathan, R.~Garnett
  (Eds.), Advances in Neural Information Processing Systems 30, Curran
  Associates, Inc., 2017, pp. 6191--6200 (2017).

\end{thebibliography}

\end{document}